\title{Matrix Ansatz, lattice paths and rook placements} 
\author{S. Corteel\addressmark{1} \and
M. Josuat-Verg\`es\addressmark{1}\thanks{Partially supported by an
Erwin Schr\"odinger fellowship and the ANR Jeune Chercheur IComb}
\and T. Prellberg \addressmark{2}\and M. Rubey\thanks{M. Rubey acknowledges
  partial support by the Austrian FWF-National Research Network
 Analytic Combinatorics and Probabilistic Number Theory,  project S9607.} \addressmark{3}}
\address{
\addressmark{1}LRI, CNRS and Universit\'e Paris-Sud, B\^atiment 490, 91405 Orsay, France

\addressmark{2}	School of Mathematical Sciences, Queen Mary, University of London, 
Mile End Road, London E1 4NS, United Kingdom

\addressmark{3}Institut f\"ur Algebra, Zahlentheorie und Diskrete Mathematik,
Leibniz Universit\"at Hannover, Welfengarten 1, 30167 Hannover, Deutschland
}
\newcommand{\qbin}[2]{\genfrac{[}{]}{0pt}{}{#1}{#2}_q}
\newcommand{\Le}{\rotatebox[origin=C]{180}{$\Gamma$}}
\newcommand{\ket}[1]{\ensuremath{|#1\rangle}}
\newcommand{\bra}[1]{\ensuremath{\langle #1|}}
\newtheorem{prop}{Proposition}
\newtheorem{lem}{Lemma}
\newtheorem{definition}{Definition}
\newtheorem{theorem}{Theorem}
\newtheorem{corollary}{Corollary}
\keywords{Enumeration, Permutation tableaux, Rook placements, Lattice paths}
\begin{document}

\maketitle

\begin{abstract}
  We give two combinatorial interpretations of the Matrix Ansatz of the
  PASEP in terms of lattice paths and rook placements.  This gives two
  (mostly) combinatorial proofs of a new enumeration formula for the
  partition function of the PASEP.  Besides other interpretations,
  this formula gives the generating function for permutations of a given
  size with respect to the number of ascents and occurrences of the
  pattern 13-2, the generating function according to weak exceedances 
  and crossings, and the $n$\textsuperscript{th} moment of certain
  $q$-Laguerre polynomials.

\noindent {\bf R\'esum\'e.}

Nous donnons deux interpr\'etations combinatoires du Matrix Ansatz du
PASEP en termes de chemins et de placements de tours. Cela donne deux
preuves (presque) combinatoires d'une nouvelle formule pour la
fonction de partition du PASEP. Cette formule donne aussi par exemple
la fonction g\'en\'eratrice des permutations de taille donn\'ee par rapport
au nombre de mont\'ees et d'occurrences du motif 13-2, la
fonction g\'en\'eratrice par rapport au nombre d'\'exc\'edences faibles et de
croisements, et le $n$\textsuperscript{i\`eme} moment de certains
polyn\^omes de $q$-Laguerre.

\end{abstract}

\section{Introduction}

In recent work of Postnikov \cite{AP}, permutations were given a new
description as pattern-avoiding fillings of Young diagrams.  More
precisely, Postnikov made a correspondence between positive Grassmann cells,
pattern-avoiding fillings called \Le-diagrams, and decorated
permutations (which are permutations where the fixed points are
bi-coloured). In particular, the usual permutations are in bijection with
permutation tableaux, a subclass of \Le-diagrams.  Permutation
tableaux have subsequently been studied by Steingr\`imsson, Williams,
Burstein, Corteel, Nadeau \cite{Bu,CN,CW,SW}, and proved to be very useful
for working on permutations.

%\smallskip

Rather surprisingly, Corteel and Williams established a link between
permutation tableaux and the stationary distribution of a
classical process studied in statistical physics, the Partially
Asymmetric Exclusion Process (PASEP).  This process is described
in \cite{CW,DEHP}. Briefly, the stationary probability of a given
state in the process is proportional to the sum of weights of
permutation tableaux of a given shape.  The factor behind this
proportionality is the partition function, which is the sum of weights
of permutation tableaux of a given half-perimeter.

%\smallskip

An alternative way of finding the stationary distribution of the PASEP is given
by the Matrix Ansatz \cite{DEHP}. Suppose that we have operators $D$ and $E$,
a row vector $\bra{W}$ and a column vector $\ket{V}$ such that:
\begin{equation} 
  DE-qED = D+E, \qquad \bra{W}E = \bra{W}, \qquad D\ket{V} = \ket{V}, \qquad \hbox{and}
  \quad \bra{W}\ket{V}=1. \label{ansatz} 
\end{equation}
Then, coding any state of the process by a word $w$ of length $n$ in
$D$ and $E$, the probability of the state $w$ is given by
$\bra{W}w\ket{V}$ normalised by the partition function $\bra{W}(D+E)^n\ket{V}$.

%\smallskip

We briefly describe how the Matrix Ansatz is related to permutation
tableaux \cite{CW}.  First, notice that there are unique polynomials
$n_{i,j}\in \mathbb{Z}[q]$ such that
$$ (D+E)^n = \sum_{i,j\geq 0} n_{i,j} E^i D^j $$
This sum is called the normal form of $(D+E)^n$. It is useful since,
for example, the sum of coefficients $n_{i,j}$ gives an evaluation of
$\bra{W}(D+E)^n\ket{V}$.  Each coefficient $n_{i,j}$ is a generating
function for permutation tableaux satisfying certain conditions, or
equivalently, {\it alternative tableaux} as defined by Viennot
\cite{XGV}.

We give here two combinatorial interpretations of the Matrix Ansatz
in terms in lattice paths and rook placements, and get two
semi-combinatorial proofs of the following theorem:
\begin{theorem} \label{main} For any $n>0$, we have: 
$$
\bra{W} (yD+E)^{n-1} \ket{V} = \tfrac 1{y(1-q)^n} \sum_{k=0}^n (-1)^k 
\left(\sum_{j=0}^{n-k} y^j\Big( \tbinom{n}{j}\tbinom{n}{j+k} - 
   \tbinom{n}{j-1}\tbinom{n}{j+k+1}\Big)\right)
\left(\sum_{i=0}^k y^iq^{i(k+1-i)}  \right).
$$
\end{theorem}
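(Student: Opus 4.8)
The plan is to exploit a concrete representation of the Matrix Ansatz in order to reduce the left-hand side to a weighted enumeration of lattice paths, which is precisely the first of the two combinatorial interpretations announced above. One fixes any representation of $D$, $E$, $\bra{W}$, $\ket{V}$ satisfying \eqref{ansatz} --- for instance the $q$-deformed oscillator one, in which $D$ and $E$ equal, up to the common factor $\tfrac1{1-q}$, the identity plus a lowering operator and the identity plus a raising operator --- so that $yD+E$ becomes tridiagonal up to $\tfrac1{1-q}$. By the classical transfer-matrix and continued-fraction correspondence of Flajolet and Viennot, $\bra{W}(yD+E)^{n-1}\ket{V}$ then equals $\tfrac1{(1-q)^{n-1}}$ times a sum of weights of Motzkin paths of length $n-1$, the weights of the up-, down- and level-steps being read off from the Jacobi matrix of $yD+E$, i.e.\ from the family of $q$-Laguerre polynomials mentioned in the abstract; the extra factor $\tfrac1{y(1-q)}$ in the statement comes from the normalisation of the boundary vectors $\bra{W}$ and $\ket{V}$, equivalently from resumming a geometric series over the starting and ending heights. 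This first step is essentially bookkeeping once the representation is pinned down.

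The substance is the evaluation of the resulting path sum, and the shape of the right-hand side is a precise guide. The inner factor $\binom nj\binom n{j+k}-\binom n{j-1}\binom n{j+k+1}$ is a $2\times 2$ determinant of binomial coefficients, hence a Lindstr\"om--Gessel--Viennot / reflection-principle count: it enumerates pairs of non-crossing lattice paths, or, after the usual folding, lattice paths of total length $2n$ confined to a strip, with $j$ recording the number of steps of one type and $k$ the width of the strip. The outer factor $\sum_{i=0}^{k} y^i q^{i(k+1-i)}$ is an \emph{area}-type generating polynomial: $q^{i(k+1-i)}$ is the weight of the $i\times(k+1-i)$ rectangle cut out by placing $i$ marks among $k+1$ positions, so this factor carries the $q$-statistic (crossings, or area under an excursion) coming from the part of each path lying strictly above the axis. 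The alternating sum $\sum_k(-1)^k$ is then an inclusion--exclusion. Accordingly, I would evaluate the path sum by a sign-reversing involution of Touchard--Riordan type: group the weighted Motzkin (Laguerre) histories according to an auxiliary parameter $k$, construct an involution that cancels in opposite-sign pairs all histories that are not ``reduced'', and match the surviving fixed points bijectively with the (shape)$\,\times\,$(area-filling) pairs enumerated by the displayed product. That the answer splits as a product reflects the fact that, for a reduced history, its underlying shape and its area-filling can be chosen independently.

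The second promised proof trades lattice paths for rook placements. Using the rook-theoretic model for the normal-ordering coefficients $n_{i,j}$ (Garsia--Remmel-type rook numbers on a staircase board), $\bra{W}(yD+E)^{n-1}\ket{V}$ becomes a generating function for rook placements in a triangular Young diagram, with $y$ tracking columns and $q$ the statistic attached to each rook. Expanding the rook polynomial in the falling-factorial basis produces the alternating sum over $k$ automatically; the binomial-difference factor again arises from the staircase shape of the board by a reflection argument, and $\sum_i y^i q^{i(k+1-i)}$ counts the ways a block of $k$ non-attacking rooks can be spread out.

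In both routes the main obstacle is the step that creates the sign $(-1)^k$: designing the sign-reversing involution on Laguerre histories (respectively, proving the rook-number identity) so that its fixed points match the stated product \emph{exactly}, including the refinement by $y$. Subsidiary difficulties are extracting the precise Motzkin step-weights from \eqref{ansatz}, equivalently identifying the correct $q$-Laguerre family, and keeping track of the prefactor $\tfrac1{y(1-q)}$, which is where the non-polynomial boundary vectors $\bra{W}$ and $\ket{V}$ enter.
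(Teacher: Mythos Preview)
Your lattice-path outline is close to the paper's first proof but differs at the crucial step. The paper does set up the Laguerre histories and does use the Lindstr\"om--Gessel--Viennot determinant for the factor $\binom nj\binom n{j+k}-\binom n{j-1}\binom n{j+k+1}$. However, the mechanism is not a sign-reversing involution: after extracting $(1-q)^{-n}$, each signed path is \emph{bijectively decomposed} (Penaud-style) into an unweighted left factor of final height $k$ times a ``reduced'' path in a set $M(k)$; the signs live entirely in the second factor. The remaining task is to show $\sum_{p\in M(k)} w(p)=\sum_{i=0}^k y^i q^{i(k+1-i)}$, and here the paper does \emph{not} proceed bijectively: it writes a quadratic $q$-functional equation for the generating function $M(z)$, linearises it to a ${}_2\phi_1$, and applies Heine's transformation. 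The authors state explicitly that the fully bijective argument you sketch (``match the surviving fixed points with (shape)$\times$(area-filling) pairs'') is the missing step they were unable to complete. So your proposal is not wrong as a strategy, but it presumes a bijection that the paper could not produce; what actually closes the argument is analytic.

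Your rook-placement outline misses the key idea. Rook placements do not model the normal ordering of $D$ and $E$ (those give permutation/alternative tableaux); they model the normal ordering of the \emph{modified} operators $\hat D=\frac{q-1}{q}D+\frac1q$ and $\hat E=\frac{q-1}{q}E+\frac1q$, which satisfy the simpler relation $\hat D\hat E-q\hat E\hat D=\frac{1-q}{q^2}$. The alternating sum over $k$ comes from the binomial inversion
\[
(1-q)^n(yD+E)^n=\sum_{k=0}^n\binom nk(1+y)^{n-k}(-1)^k q^k(y\hat D+\hat E)^k,
\]
not from a falling-factorial expansion of a rook polynomial on a fixed staircase. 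The rook placements range over \emph{all} Young diagrams of half-perimeter $n$, and the proof proceeds by a bijection to pairs (involution, Young diagram), a recurrence for the zero-free-row case, and an explicit solution of that recurrence. None of the ingredients you name (triangular board, falling-factorial basis, reflection on the staircase) appear.

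A minor point: the extra $\tfrac1{y(1-q)}$ does not come from boundary vectors or a geometric series over heights (here $\bra W=\ket V^T=(1,0,0,\dots)$); the $\tfrac1y$ simply moves the $y$ across, and the passage from $(1-q)^{-(n-1)}$ to $(1-q)^{-n}$ comes from the standard bijection taking bicoloured Motzkin paths of length $n-1$ to those of length $n$ with no type-2 east step at height $0$.
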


The combinatorial interpretation of this polynomial, in terms of
permutations, is given in Proposition~\ref{comb}. For $y=1$ this
specialises to:

\begin{corollary}\label{main2} For any $n>0$, we have: 
$$
\bra{W}(D+E)^{n-1}\ket{V} = \frac 1{(1-q)^n} \sum_{k=0}^n (-1)^k 
\Big( \tbinom{2n}{n-k} - \tbinom{2n}{n-k-2} \Big)
\Bigg( \sum_{i=0}^k q^{i(k+1-i)} \Bigg). 
\label{cr}
$$
\end{corollary}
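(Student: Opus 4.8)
The plan is simply to specialise Theorem~\ref{main} at $y=1$ and then collapse the resulting double sum with the Chu--Vandermonde identity. Setting $y=1$ in the statement of Theorem~\ref{main}, the prefactor $\tfrac1{y(1-q)^n}$ becomes $\tfrac1{(1-q)^n}$ and the inner factor $\sum_{i=0}^k y^iq^{i(k+1-i)}$ becomes exactly $\sum_{i=0}^k q^{i(k+1-i)}$, which already matches the corresponding factor on the right-hand side of Corollary~\ref{main2}. So everything reduces to proving the binomial identity
\begin{equation*}
\sum_{j=0}^{n-k}\Big(\tbinom{n}{j}\tbinom{n}{j+k}-\tbinom{n}{j-1}\tbinom{n}{j+k+1}\Big)=\tbinom{2n}{n-k}-\tbinom{2n}{n-k-2}.
\end{equation*}

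For the first sum I would rewrite $\binom{n}{j+k}=\binom{n}{n-j-k}$ and recognise $\sum_j\binom{n}{j}\binom{n}{n-j-k}$ as the coefficient of $x^{n-k}$ in $(1+x)^n(1+x)^n=(1+x)^{2n}$, that is $\binom{2n}{n-k}$ by Chu--Vandermonde. For the second sum, the substitution $j\mapsto j+1$ turns $\sum_j\binom{n}{j-1}\binom{n}{j+k+1}$ into $\sum_j\binom{n}{j}\binom{n}{j+k+2}=\sum_j\binom{n}{j}\binom{n}{n-j-k-2}=\binom{2n}{n-k-2}$, again by Chu--Vandermonde. Extending the range of $j$ to all of $\mathbb{Z}$ in these manipulations is harmless, since $\binom{n}{m}=0$ whenever $m<0$ or $m>n$, so the only nonzero contributions come from $0\le j\le n-k$ in the first sum and from the analogous range in the second. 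Substituting the two evaluations back into Theorem~\ref{main} yields the claimed formula.

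There is essentially no hard step here: the real content of Corollary~\ref{main2} lies entirely in Theorem~\ref{main}, and the passage to $y=1$ is a one-line application of Vandermonde's convolution. The only point requiring a little care is the index bookkeeping --- checking that the summation bounds appearing in Theorem~\ref{main} (namely $0\le j\le n-k$ together with $0\le k\le n$) are exactly the ranges on which the two binomial products are supported, so that replacing them by sums over all integers changes nothing and the Vandermonde evaluation applies verbatim.
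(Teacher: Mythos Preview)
Your proof is correct and follows exactly the same approach as the paper, which simply introduces Corollary~\ref{main2} with the phrase ``For $y=1$ this specialises to''; you have merely spelled out the Chu--Vandermonde step that the paper leaves implicit. There is nothing to add.
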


%\smallskip

Besides the references mentioned earlier, we have to point out an
article of Williams \cite{LW}, where we find the following formula for
the coefficient of $y^{m-1}$ in $\bra{W}(yD+E)^n\ket{V}$:
\begin{equation}
E_{m,n}(q)=\sum_{i=0}^{m-1}(-1)^i[m-i]_q^nq^{mi-m^2}
\left( \tbinom n i q^{m-i} + \tbinom n{i-1}
\right).
\label{lauren}
\end{equation}
It was obtained by enumerating \Le-diagrams of a given shape and then
computing the sum of all possible shapes.  Until now it was the only
known polynomial formula for the distribution of a permutation pattern
of length greater than two (See Proposition \ref{comb}).  Although the
article \cite{LW} focuses on \Le-diagrams, Williams and her coauthors
sketched in Section~4 of \cite{NTW} how this could have been done
directly on permutation tableaux.  Recently, Williams's formula has
been obtained also by Kasraoui, Stanton and Zeng in their work on
orthogonal polynomials \cite{KSZ}.  We will show in the last section
how our formula can be applied to prove and extend a conjecture
presented in \cite{LW}.

The polynomial $y\bra{W}(yD+E)^{n-1}\ket{V}$ was already heavily
studied.
\begin{prop}   \label{comb}
For any $n\geq 1$ the following polynomials are equal:
\begin{itemize}
\item $y\bra{W}(yD+E)^{n-1}\ket{V}$,
\item the generating function for permutation tableaux of size $n$,
  the number of lines counted by $y$ and the number of superfluous 1's
  counted by $q$ \cite{CW,XGV2},
\item the generating function for permutations of size $n$, the number
  of ascents counted by $y$ and the number of 13-2 patterns counted by
  $q$ \cite{CN,SW},
\item the generating function for permutations of size $n$, the number
  of weak exceedances counted by $y$ and the number of crossings
  counted by $q$ \cite{Co,SW},
\item the generating function of PDSAWs (partially directed self-avoiding 
  walks) in the asymmetric wedge of length $n$ where the
  number of descents is counted by $y$ and the number of north steps
  is counted by $q$ \cite{R},
\item the $n$\textsuperscript{th} moment of the $q$-Laguerre polynomials
  \cite{KSZ, R}.
\end{itemize}
\end{prop}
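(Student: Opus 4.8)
The plan is to read Proposition~\ref{comb} as a chain of already-known correspondences, glued together through weighted Motzkin paths (Laguerre histories) as a common hub, and then to certify that the statistics are transported correctly at each step. The first link is between the Matrix Ansatz quantity and lattice paths: normal-ordering $(yD+E)^{n-1}$ by means of $DE-qED=D+E$ and then using $\bra{W}E=\bra{W}$, $D\ket{V}=\ket{V}$ and $\bra{W}\ket{V}=1$ turns $\bra{W}(yD+E)^{n-1}\ket{V}$ into the sum of the normal-form coefficients $n_{i,j}(y,q)$. By Corteel--Williams \cite{CW} (see also \cite{DEHP}) together with Viennot's alternative-tableaux reformulation \cite{XGV,XGV2}, each $n_{i,j}$ enumerates permutation tableaux of the shape encoded by $(i,j)$ with lines marked by $y$ and superfluous $1$'s marked by $q$; summing over all shapes of half-perimeter $n$ yields the second bullet, the prefactor $y$ absorbing the shift between ``size $n$'' and ``word of length $n-1$''. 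This is in fact exactly what the two interpretations developed in the body of the paper accomplish, in two different ways.

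Next I would pass from tableaux to permutations. I would invoke the bijection of Corteel--Nadeau \cite{CN} (equivalently Steingr\'imsson--Williams \cite{SW}) between permutation tableaux of size $n$ and $S_n$, under which the number of lines becomes the number of ascents (up to the standard unit shift) and the number of superfluous $1$'s becomes the number of 13-2 patterns, giving the third bullet; and then compose with the recoding of Corteel \cite{Co} / Steingr\'imsson--Williams \cite{SW} on $S_n$ --- a Foata--Zeilberger-type map through Laguerre histories --- that carries (ascents, 13-2 patterns) to (weak exceedances, crossings), giving the fourth bullet. Since each of these bijections factors through Laguerre histories (Fran\c{c}on--Viennot on the pattern side, Foata--Zeilberger on the exceedance side), the cleanest write-up records the history-level weight to which every statistic specialises and checks that dictionary once.

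For the fifth bullet I would use Rubey's bijection \cite{R} between PDSAWs in the asymmetric wedge of length $n$ and the same weighted Motzkin paths, matching descents with $y$ and north steps with $q$. For the sixth bullet, the Flajolet--Viennot combinatorial theory of moments expresses the $n$th moment of any orthogonal sequence as a generating function of weighted Motzkin paths built from its three-term recurrence; inserting the $q$-Laguerre recurrence coefficients computed in \cite{KSZ,R} reproduces precisely the path model obtained in the first step, closing the loop.

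\textbf{Main obstacle.} Nothing here is genuinely new; the work is bookkeeping. The two delicate points are (i) pinning down the size/length conventions so that ``size $n$'', ``word of length $n-1$'' and ``walk of length $n$'' line up and the extra factor of $y$ lands correctly, and (ii) certifying that the four a priori unrelated statistics --- superfluous $1$'s, 13-2 patterns, crossings, north steps --- really are matched by the stated maps, which I would do uniformly on the level of Laguerre histories by exhibiting the single weight function on paths to which each of them restricts. Each individual equivalence is in the cited literature, so the content of a proof here is precisely the verification that these bijections compose consistently.
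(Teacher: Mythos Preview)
The paper does not actually prove Proposition~\ref{comb}: it is stated as a compendium of results already in the literature, with each bullet carrying its own citation, and no argument is given beyond those references. Your proposal --- to chain together precisely those cited bijections (Corteel--Williams for tableaux, Corteel--Nadeau/Steingr\'imsson--Williams for 13-2 patterns and crossings, Rubey for PDSAWs, Kasraoui--Stanton--Zeng/Rubey for the $q$-Laguerre moments), using weighted Motzkin paths as the common hub --- is exactly the intended reading, so your approach coincides with the paper's.
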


\noindent {\bf Remark.} We can view the formula in
Corollary~\ref{main2} as an analog of the Touchard-Riordan formula
\cite{JT} for the number of matchings of $2n$ according to the number
of crossings:
$$ \sum_{\text{$M$ matching of $2n$}} q^{\hbox{cr}(M)} = \frac 1 {(1-q)^n}
\sum_{k=0}^n (-1)^k \left( \binom{2n}{n-k} - \binom{2n}{n-k-1} \right)
q^{\frac {k(k+1)}2}.
$$
We remark that this formula also gives the $2n$\textsuperscript{th}
moment of the $q$-Hermite polynomials.

In \cite{JGP}, Penaud gave a combinatorial proof of this formula.  By
generalising Penaud's method we conjectured Theorem \ref{main} and
were hoping for a completely combinatorial proof thereof.  However, at
the time of writing the last step of this combinatorial proof is still
missing.

%\smallskip

This article is organised as follows: we first show how the Matrix
Ansatz is naturally related to lattice paths.  Then we give two proofs
of our main Theorem, one based on lattice paths and the other one
based on rook placements.  We end with a discussion and some
applications.

%\smallskip

%\section{The Matrix Ansatz, permutation tableaux and lattice paths}

\section{A first proof using lattice paths and functional equations}

\subsection{The Matrix Ansatz and lattice paths}

We follow the ideas developed in \cite{BCEPR,BE}.  Looking for a
solution of the system defined in Equation~\eqref{ansatz} we find:
\begin{prop}\label{prop:solution}
  Let $D=(D_{i,j})_{i,j\ge 0}$ and $E=(E_{i,j})_{i,j\ge 0}$ such that
\begin{eqnarray*}
  D_{i,j}&=&
  \left\{\begin{array}{ll}
      1+\ldots +q^{i} &\text{if $i$ equals $j-1$  or $j$,}\\
      0 & \text{otherwise,}
    \end{array} \right.\\
  E_{i,j}&=&
  \left\{\begin{array}{ll}
      1+\ldots +q^{i} &\text{if $i$ equals $j$  or $j+1$,}\\
      0 & {\rm otherwise,}
    \end{array} \right.\\
  \bra{W}&=&(1,0,0,\ldots)\text{, and}\\
  \ket{V}&=&(1,0,0,\ldots)^T\;.
\end{eqnarray*}
Then these matrices and vectors satisfy the Ansatz of
Equation~\eqref{ansatz}.
\end{prop}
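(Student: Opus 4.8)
The plan is to verify the four conditions of Equation~\eqref{ansatz} directly from the explicit tridiagonal matrices given in the statement. The conditions $\bra{W}E=\bra{W}$, $D\ket{V}=\ket{V}$ and $\bra{W}\ket{V}=1$ are immediate: since $\bra{W}$ and $\ket{V}$ are the first standard basis (co)vector, these reduce respectively to $E_{0,0}=1$, $D_{0,0}=1$ and the trivial pairing, all of which hold because the empty sum $1+\dots+q^0$ equals $1$. So the real content is the commutation relation $DE-qED=D+E$, which I would prove by computing the $(i,j)$ entry of each side. Write $[k]_q = 1+q+\dots+q^{k-1}$ for brevity (note $D_{i,i-1}=D_{i,i}=[i+1]_q$ and $E_{i,i}=E_{i,i+1}=[i+1]_q$, with all other entries zero).

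The key step is the entrywise computation. Because $D$ and $E$ are each supported on two adjacent diagonals, the product $DE$ has entries $(DE)_{i,j}=\sum_k D_{i,k}E_{k,j}$ where $k$ ranges over $\{i,i+1\}\cap\{j-1,j\}$, so $DE$ is supported on diagonals $j\in\{i-1,i,i+1\}$; similarly for $ED$. Hence $DE-qED$ is tridiagonal, matching the support of $D+E$ (which lives on diagonals $j\in\{i-1,i,i+1\}$ as well), and it suffices to check three cases: the subdiagonal $j=i-1$, the main diagonal $j=i$, and the superdiagonal $j=i+1$. In each case one writes out the (at most two) surviving terms of $(DE)_{i,j}$ and of $(ED)_{i,j}$ in terms of $[i]_q,[i+1]_q,[i+2]_q$ and then uses the elementary identity
\[
[a]_q[b]_q - q\,[a']_q[b']_q = \text{(the appropriate entry of }D+E\text{)},
\]
which ultimately rests on $[m+1]_q - q[m]_q = 1$ and $[m]_q = 1 + q[m-1]_q$. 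For instance, on the superdiagonal $j=i+1$ only $k=i+1$ contributes to $DE$ and only $k=i$ to $ED$, giving $(DE-qED)_{i,i+1} = [i+2]_q[i+1]_q - q[i+1]_q[i+1]_q = [i+1]_q\big([i+2]_q - q[i+1]_q\big) = [i+1]_q = E_{i,i+1}$, and since $D_{i,i+1}=0$ this is exactly $(D+E)_{i,i+1}$. The subdiagonal case is symmetric, and the main-diagonal case combines two terms from each product and produces $[i+1]_q$ twice, i.e. $D_{i,i}+E_{i,i}$.

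I do not expect a genuine obstacle here; the only thing to be careful about is bookkeeping at the boundary $i=0$ (where the subdiagonal entries vanish and one must check no spurious term appears) and keeping the index ranges of the summation $\sum_k$ correct so that no diagonal is missed. A clean way to organise the write-up is to record once and for all the three identities for $(DE)_{i,j}$ and $(ED)_{i,j}$ on the three relevant diagonals, subtract, and read off agreement with $D_{i,j}+E_{i,j}$; the infinite-dimensionality of the matrices causes no problem since every entry is a finite sum. This establishes that the displayed $D$, $E$, $\bra{W}$, $\ket{V}$ satisfy~\eqref{ansatz}.
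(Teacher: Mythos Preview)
Your plan is sound and essentially what anyone would do here; the paper itself does not supply a proof of this proposition (it is stated with a reference to \cite{BCEPR,BE} and the verification is left implicit), so there is nothing to compare against beyond the routine entrywise check you outline.

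There is, however, a persistent indexing slip you should fix. From the statement, $D_{i,j}$ is nonzero when $i\in\{j-1,j\}$, i.e.\ $j\in\{i,i+1\}$, so $D$ is \emph{upper} bidiagonal with $D_{i,i}=D_{i,i+1}=[i+1]_q$; likewise $E_{i,j}$ is nonzero when $j\in\{i-1,i\}$, so $E$ is \emph{lower} bidiagonal with $E_{i,i-1}=E_{i,i}=[i+1]_q$. Your parenthetical note swaps these. Your superdiagonal computation is in fact carried out with the correct convention (the claims ``only $k=i+1$ contributes to $DE$'' and ``only $k=i$ to $ED$'' are right precisely because $D$ is upper and $E$ is lower bidiagonal), but the last line should read $[i+1]_q=D_{i,i+1}$ with $E_{i,i+1}=0$, not the other way around. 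The swap also matters for the vector conditions: with your stated (incorrect) support one would get $(D\ket{V})_1=[2]_q\neq 0$, so $D\ket{V}\neq\ket{V}$. Once the off-diagonals are put on the correct side, all three diagonal checks and the boundary case $i=0$ go through exactly as you describe.
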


%\marginpar{this par should be checked. Do we need it in this
%  generality?}  
We can interpret $y\bra{W}(yD+E)^{n-1}\ket{V}$ as the
generating polynomial of paths of length $n-1$. The weight of a path
is the product of the weight of its steps and the weight of the
starting and ending points.  If a path starts (resp. ends) at $(0,i)$
(resp. $(n-1,i)$) the weight of the starting (resp. ending) point is
$W_i$ (resp. $V_i$). The weight of a step going from $(x,i)$ to
$(x+1,j)$ is $D_{i,j}+E_{i,j}$.  We call $i$ the starting height of
the step.  See \cite{BCEPR,BE} for details.

Proposition~\ref{prop:solution} implies that the paths we are dealing
with here are bi-coloured Motzkin paths, i.e., paths that start and end
at height zero and consist of north-east, south-east and two types of
east steps.  Using a classical bijection we can transform these paths
of length $n-1$ into Motzkin paths of length $n$ where east steps
of type~2 can not appear at height zero.

\begin{prop}\label{prop:histoires-laguerre}
  $y\bra{W}(yD+E)^{n-1}\ket{V}$ is the generating polynomial of
  weighted bi-coloured Motzkin paths of length $n$ such that the weight
  of steps starting at height $i$ is
  \begin{itemize}
  \item $y+yq+\ldots+yq^i=y\frac{1-q^{i+1}}{1-q}$ for north-east
    steps and east steps of type~1, and
  \item $1+q+\ldots+q^{i-1}=\frac{1-q^i}{1-q}$ for south-east steps
    and east steps of type~2.
\end{itemize}
\end{prop}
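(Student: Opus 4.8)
The plan is to derive Proposition~\ref{prop:histoires-laguerre} from Proposition~\ref{prop:solution} in two moves: first reinterpret $\bra{W}(yD+E)^{n-1}\ket{V}$ as a sum over bi-coloured Motzkin paths of length $n-1$ by expanding the matrix product, and then apply a length-shifting bijection to obtain paths of length $n$ with the stated step weights. For the first move, I would write $(yD+E)^{n-1}$ as a sum of products of the matrices $yD$ and $E$ over all binary words of length $n-1$; since $\bra{W}$ and $\ket{V}$ are the first basis covector/vector, the quantity $\bra{W}(yD+E)^{n-1}\ket{V}$ is the $(0,0)$ entry, which unfolds as a sum over sequences of indices $0=i_0,i_1,\dots,i_{n-1}=0$ of products of entries $(yD+E)_{i_t,i_{t+1}}$. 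By inspection of the support of $D$ and $E$ in Proposition~\ref{prop:solution}, the entry $(yD+E)_{i,j}$ is nonzero only when $j\in\{i-1,i,i+1\}$, so each index sequence is exactly a bi-coloured Motzkin path of length $n-1$: a step $i\to i+1$ carries weight $y(1+\cdots+q^i)=yD_{i,i+1}$ (north-east), a step $i\to i-1$ carries weight $1+\cdots+q^{i-1}=E_{i,i-1}$ (south-east), and a level step $i\to i$ splits into the two contributions $yD_{i,i}=y(1+\cdots+q^i)$ and $E_{i,i}=1+\cdots+q^i$, giving the two colours of east step. Tracking the factor of $y$, one sees $y\bra{W}(yD+E)^{n-1}\ket{V}$ weights a path by $y^{1+\#\{\text{NE steps}\}+\#\{\text{E steps of type 1}\}}$ times the appropriate $q$-powers; this is precisely the bi-coloured Motzkin interpretation already asserted (with reference to \cite{BCEPR,BE}) in the paragraph preceding the proposition.

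For the second move I would use the classical ``add a step at the bottom'' bijection between bi-coloured Motzkin paths of length $n-1$ and Motzkin paths of length $n$ in which east steps of type~2 are forbidden at height zero. Concretely, reading the path of length $n-1$ and inserting appropriate steps (or, in the standard formulation, prepending a north-east step and appending a south-east step, then rescaling heights), one checks that a step whose starting height was $i$ in the old path becomes a step whose starting height is $i+1$ in the new path; the ban on type-2 east steps at height zero reflects the fact that in the old picture height $-1$ was never reached, equivalently that $E_{0,0}$'s contribution must not be duplicated at the new ground level. Under this correspondence the weight $1+\cdots+q^i$ attached to NE / type-1 E steps at old height $i$ becomes $1+q+\cdots+q^i = \frac{1-q^{i+1}}{1-q}$ at new height $i+1$, i.e.\ $\frac{1-q^{h}}{1-q}$ with $h$ the new starting height shifted—wait, more carefully: after the height shift a step starting at height $h$ in the new path had old starting height $h-1$ for the ``raised'' portion, but the inserted bottom step is NE with weight $\frac{1-q^{0+1}}{1-q}=1$; collecting cases one obtains exactly $y\frac{1-q^{i+1}}{1-q}$ for NE and type-1 E steps starting at height $i$, and $\frac{1-q^i}{1-q}$ for SE and type-2 E steps starting at height $i$, as claimed. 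I would verify the $y$-bookkeeping separately: the extra prepended NE step contributes one more factor of $y$, which is exactly the global factor $y$ multiplying $\bra{W}(yD+E)^{n-1}\ket{V}$ in the statement.

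The routine but error-prone part is pinning down the exact bijection and checking that every step's starting height and colour-weight transform correctly, including boundary steps and the height-zero restriction; this is the step I expect to require the most care, though it is entirely standard (it is the same device used to pass between Motzkin paths and ``Laguerre histories''). Everything else—the expansion of the matrix product and the identification of the geometric-series entries of $D$ and $E$ with $q$-integers $\frac{1-q^{i+1}}{1-q}$—is immediate from Proposition~\ref{prop:solution}. I would present the matrix-product expansion in one short paragraph, invoke \cite{BCEPR,BE} for the bi-coloured Motzkin reformulation, and then spend the bulk of the argument on the explicit length-shifting bijection and its effect on weights.
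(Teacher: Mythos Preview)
Your two-step plan---expand the matrix product to get weighted bi-coloured Motzkin paths of length $n-1$, then apply a classical length-shifting bijection to obtain length-$n$ paths with no type-2 east steps at height $0$---is exactly what the paper does (the paper states both steps in the paragraph preceding the proposition and then just cites \cite{Co,CW,SW} in lieu of a proof).

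Two concrete slips to fix before you write this up. First, from Proposition~\ref{prop:solution} one has $E_{i,i-1}=1+q+\cdots+q^{i}$, not $1+\cdots+q^{i-1}$ as you wrote; so in the length-$(n-1)$ model the SE and type-2 east steps from height $i$ both carry weight $[i+1]_q$, while in the target length-$n$ model they carry $[i]_q$. The NE and type-1 east weights $y[i+1]_q$ are unchanged between the two models. Second, precisely because only the SE/type-2 weights shift, the bijection cannot be a uniform height shift, and your ``prepend $U$, append $D$'' sketch produces a path of length $n+1$, not $n$. The classical bijection that works here acts by decomposing the length-$n$ path at its returns to height $0$: each maximal excursion $U\,p\,D$ above height $0$ corresponds (after lowering $p$ by one) to a factor of the length-$(n-1)$ path terminated by a type-2 east step, and each type-1 east step at height $0$ stays a type-1 east step; one checks that this moves the SE and type-2 weights from $[i+1]_q$ to $[i]_q$ while leaving NE and type-1 weights alone, and that the single ``lost'' step accounts for the global factor $y$. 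Once you correct $E_{i,i-1}$ the bookkeeping for this bijection becomes straightforward.
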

This can also be done combining results in \cite{Co,CW,SW}.

\subsection{The proof}

The method used in this subsection is inspired by an article of Penaud
\cite{JGP}.  We extract a factor of $(1-q)^n$ from the generating
polynomial of the weighted bi-coloured Motzkin paths from
Proposition~\ref{prop:histoires-laguerre} and obtain that
$$
y\bra{W}(yD+E)^{n-1}\ket{V}= \frac{1}{(1-q)^n}\sum_{p\in P(n)}w(p),
$$ 
where $P(n)$ is the set of labelled bi-coloured Motzkin paths of length
$n$ such that the weight of steps starting at height $i$ is either

\begin{itemize}
\item $y$ or $-yq^{i+1}$ for north-east steps or east steps of type~1.
\item $1$ or $-q^i$ for south-east steps or east steps of type~2.
\end{itemize}

Let $M(n)$ be the subset of the paths in $P(n)$ such that the weight
of any east step and the weight of any peak (a north-east step
followed by a south-east step) is neither $1$ nor $y$.  Let ${\mathcal
  M}_{n,k,j}$ be the number of left factors of bi-coloured Motzkin
paths of length $n$, final height $k$, and with $j$ south-east and
east steps of type~1.
\begin{lem}
  There is a bijection between paths in $P(n)$ and pairs of paths such
  that
  \begin{itemize}
  \item the first path is a left factor of a bi-coloured Motzkin path
    of length $n$ and final height $k$
  \item the second path is in $M(k)$, for some $k$ between $0$ and
    $n$.
  \end{itemize}
  In particular, we have
  $$
  \sum_{p\in P(n)}w(p)= 
  \sum_{k=0}^n\sum_{j=0}^{n-k}
  {\mathcal M}_{n,k,j}y^j 
  \sum_{p\in M(k)}w(k).
  $$
\end{lem}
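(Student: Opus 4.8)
The plan is to exhibit a bijection between paths in $P(n)$ and pairs consisting of a left factor of a bi-coloured Motzkin path and a path in $M(k)$, by identifying the ``forbidden'' steps (east steps and peaks of weight $1$ or $y$) inside a path $p \in P(n)$ and using them to split $p$ into an initial segment and a remainder. The key observation is that the definition of $M(k)$ excludes precisely those configurations where a step is ``trivially weighted'' (weight $1$ or $y$), so the left factor should absorb all such trivial pieces while the second path retains only the genuinely $q$-weighted structure.

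**First I would** make precise how a path $p \in P(n)$ decomposes. Reading $p$ from left to right, repeatedly strip off any east step of weight $1$ or $y$, and any peak (north-east immediately followed by south-east) whose two steps together contribute weight $1$ or $y$ — where I must check that ``weight $1$ or $y$'' for a peak starting at height $i$ means the north-east step has weight $y$ (not $-yq^{i+1}$) and the south-east step has weight $1$ (not $-q^i$), which is a height-independent local condition. The claim is that after removing all such trivial east steps and trivial peaks and concatenating the surviving steps, one is left with a path in $M(k)$ for the appropriate final height $k$, while the record of \emph{where} the removed pieces were inserted, together with the heights, reconstitutes a left factor of a bi-coloured Motzkin path of length $n$ with final height $k$. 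Going the other way, given a left factor of length $n$ and final height $k$ and a path $q' \in M(k)$, one reinserts the path $q'$ into the ``gaps'' of the left factor; the non-trivial point is that this is well-defined and inverts the stripping procedure.

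**The enumerative identity** then follows by summing over the decomposition: for fixed $k$ and fixed number $j$ of south-east and type-$1$-east steps in the left factor, the left factors are counted by ${\mathcal M}_{n,k,j}$ and contribute a factor $y^j$ (each such step in the left factor has weight $y$ or $1$, and exactly the $j$ ``down-or-type-1'' ones carry $y$ — more precisely the $y$-exponent of a trivially-weighted left factor equals its number of up/type-1 steps, which by the height constraint equals $j$ plus the fixed final-height contribution; I need to pin down this bookkeeping so the exponent comes out exactly $y^j$), while the $M(k)$-part contributes $\sum_{p \in M(k)} w(p)$ independently. Multiplicativity of the weight under concatenation gives the displayed formula.

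**The main obstacle** I expect is verifying that the stripping/reinsertion map is genuinely a bijection: one must ensure that after removing trivial pieces the heights still make sense (the remainder really is a path returning to height $0$, i.e.\ lies in $M(k)$ rather than merely being a left factor), that no trivial pieces remain ``hidden'' inside what looks like a non-trivial configuration, and that the insertion positions are recorded unambiguously so the left factor is uniquely determined. A subtlety worth flagging is that removing a trivial peak lowers all subsequent heights by one, and removing a trivial east step at height $i$ leaves heights unchanged but shifts positions — so the left factor must encode not a set of positions in the \emph{final} path but rather the sequence of insertions performed; getting this encoding to match exactly the combinatorial model counted by ${\mathcal M}_{n,k,j}$ (left factors of bi-coloured Motzkin paths with $j$ down/type-1 steps) is where the care is needed. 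Once the bijection is established, the weight identity is routine from multiplicativity.
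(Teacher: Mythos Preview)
Your overall strategy---separating the ``trivially weighted'' portion of a path from its non-trivial skeleton---matches the paper's. But your construction of the left factor is where the proposal has a genuine gap, and your decomposition, while ultimately equivalent for the second path, is less direct than the paper's.

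The paper does not iteratively strip trivial east steps and peaks. It reads $p$ once from left to right and writes
\[
p \;=\; m_1\, q_1\, m_2\, q_2 \cdots m_k\, q_k\, m_{k+1},
\]
where each $m_i$ is the \emph{maximal} (possibly empty) sub-path with all steps of weight $1$ or $y$ that returns to its starting height, and $q_i$ is the single next step. Because each $m_i$ has net height change zero, the starting height of $q_i$ in $p$ equals its starting height in the concatenation $q_1\cdots q_k$; so weights transfer verbatim and your height-shifting worry evaporates. Maximality of the $m_i$ is precisely what forces $q_1\cdots q_k\in M(k)$: a trivial east step among the $q_i$, or a trivial peak $q_i q_{i+1}$ (necessarily with $m_{i+1}$ between them, also trivial and height-neutral), would contradict maximality of $m_i$.

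The missing idea is the left factor. You describe it only as ``the record of where the removed pieces were inserted'' and later speak of ``gaps'' without saying what object this is or why it has length~$n$ and final height exactly~$k$. The paper's construction is concrete: in the sequence $m_1\, q_1\cdots q_k\, m_{k+1}$, replace each single step $q_i$ by a north-east step. The result has length $n$; since each $m_i$ contributes net height~$0$ and each replacement contributes~$+1$, the final height is~$k$. The inverse is then clear: any left factor of height $k$ has a canonical decomposition $m_1\,\mathrm{NE}\,m_2\,\mathrm{NE}\cdots\mathrm{NE}\,m_{k+1}$ (each $m_i$ maximal returning to height $i-1$), and one substitutes $q_1,\dots,q_k$ back for those $k$ north-east steps. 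Your ``iterative stripping'' gives no canonical record of this kind, since the order of removals is not unique.

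Finally, your $y^j$ bookkeeping is slightly tangled. The steps carrying weight $y$ inside the $m_i$ are the NE and E1 steps, not the ``down-or-type-1'' ones. The reason the exponent nonetheless equals $j=\#\mathrm{SE}+\#\mathrm{E1}$ (counted in the left factor) is that each $m_i$ returns to its start, so $\#\mathrm{NE}=\#\mathrm{SE}$ within the $m_i$'s, while the $k$ replaced steps contribute only NE's to the left factor and hence do not affect $j$.
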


%\begin{itemize}
%\item $y$ or $-yq^{i+1}$ for north-east steps or east steps of type~1, or
%\item $1$ or $-q^i$ for south-east steps or east steps of type~2.
%\end{itemize}
%Following Penaud, we call these paths ``Tout ou rien''-paths.
%
%Let $M(n)$ be the subset of the paths in $P(n)$ such that the weight
%of any east step and the weight of any peak (a north-east step
%followed by a south-east step) is neither $1$ nor $y$.  Let ${\mathcal
%  M}_{n,k,j}$ be the number of left factors of bicoloured Motzkin
%paths of length $n$, final height $k$, and with $j$ south-east and
%east steps of type~1.
%\begin{theorem}
%  There is a bijection between ``Tout ou rien''-paths of length $n$
%  and pairs of paths such that
%  \begin{itemize}
%  \item the first path is a left factor of a bicoloured Motzkin path
%    of length $n$ and final height $k$, and
%  \item the second path is in $M(k)$.
%  \end{itemize}
%  This bijection is such that
%  $$
%  \sum_{p\in P(n)}w(p)= 
%  \sum_{k=0}^n\sum_{j=0}^{\lfloor(n-k)/2\rfloor}
%  {\mathcal M}_{n,k,j}y^j 
%  \sum_{p\in M(k)}w(k).
%  $$
%\end{theorem}
\begin{proof} Let $p$ be a path in $P(n)$.  We decompose $p$ into a
  sequence $m_1 q_1 m_2 q_2 \dots m_k q_k m_{k+1}$ such that
  \begin{itemize}
  \item the $m_i$ are maximal (but possibly empty) sub-paths of $p$
    with all steps having weight $1$ or $y$, and returning to their
    starting height,
  \item the $q_i$ are single steps.
  \end{itemize}
  It follows that $q_1 q_2\dots q_k$ is a path in $M(k)$.  Replacing
  in the sequence $m_1 q_1 m_2 q_2 \dots m_k q_k
  m_{k+1}$ each step $q_i$ by a north-east step, and taking into account the number of
  south-east steps and east steps of type~1, we obtain a path in
  ${\mathcal M}_{n,k,j}$ of weight $y^j$.
\end{proof}

It remains to compute ${\mathcal M}_{n,k,j}$ and $M_k=\sum_{p\in
  M(k)}w(k)$.
\begin{prop}
  The number ${\mathcal M}_{n,k,j}$ of left factors of bi-coloured
  Motzkin paths of length $n$, final height $k$, and with $j$
  south-east steps and east steps of type~1, is
  $\binom{n}{j}\binom{n}{j+k}-\binom{n}{j-1}\binom{n}{j+k+1}$.
\end{prop}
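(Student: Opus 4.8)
The plan is to count left factors of bi-coloured Motzkin paths directly by a reflection-type argument, exploiting the fact that the steps starting at a given height have equal weights for (north-east, east of type 1) and for (south-east, east of type 2). First I would encode a left factor of length $n$ with final height $k$ as a lattice path from $(0,0)$ to $(n,k)$ using unit up-steps $U$, unit down-steps $D$, and two colours of level steps $L_1,L_2$, subject to staying weakly above the $x$-axis. The statistic "number of south-east steps and east steps of type 1" is $j$; note that a left factor with $a$ up-steps, $b$ down-steps, and $\ell$ level steps satisfies $a-b=k$ and $a+b+\ell=n$. The key observation is that pairing each down-step and each type-1 level step as a "descent-type" step and each up-step and each type-2 level step as an "ascent-type" step makes the object essentially a path in a single pair of step types: contracting colours, a left factor with $j$ descent-type steps among $n$ steps reaching height $k$ corresponds to choosing which $j$ of the $n$ steps are descent-type and which $j+k$ are... — more precisely I would set up a bijection with pairs of lattice paths (or, equivalently, with subsets), so that the unconstrained count (ignoring the nonnegativity condition) is exactly $\binom{n}{j}\binom{n}{j+k}$.

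The cleanest route is the Lindström–Gessel–Viennot / cycle-lemma style reflection. I would first show that the number of \emph{all} (not necessarily nonnegative) sequences of $n$ steps from $\{U,D,L_1,L_2\}$ with $j$ steps of descent-type and final height $k$ equals $\binom{n}{j}\binom{n}{j+k}$: choose the $j$ descent-type positions in $\binom{n}{j}$ ways, then among the remaining $n-j$ positions the ascent-type steps must produce height $k+j$ worth of "net rise" accounting... — concretely, if there are $d$ down-steps and $t$ type-1 level steps with $d+t=j$, and $u$ up-steps with $u-d=k$, then $u=k+d$, and the ascent-type steps number $u + (\text{type-2 levels})$; a short manipulation shows the free count is $\binom{n}{j}\binom{n}{j+k}$. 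Then I would apply the reflection principle: paths that dip below the axis are in bijection (reflect at the first passage to height $-1$) with unrestricted paths ending at height $-k-2$ having $j-1$ descent-type steps, whose count is $\binom{n}{j-1}\binom{n}{j+k+1}$. Subtracting gives the claimed formula.

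I expect the main obstacle to be bookkeeping the two colours of level steps correctly so that the "descent-type vs ascent-type" dichotomy really does reduce the enumeration to a product of two binomial coefficients; the colours must be distributed so that after reflection the image is again a legitimate bi-coloured path with the shifted parameters, and one must check that the first-passage decomposition interacts properly with the colouring (a level step is never the step that first reaches height $-1$, so the reflection acts only on $U\leftrightarrow D$ and leaves colours untouched, which is what makes it work). Once the dichotomy is correctly in place, both the free count and the reflected count are routine, and the alternating difference $\binom{n}{j}\binom{n}{j+k}-\binom{n}{j-1}\binom{n}{j+k+1}$ drops out immediately. An alternative I would keep in reserve is a generating-function proof: the bivariate generating function for these left factors satisfies a quadratic functional equation (kernel method), and extracting coefficients via the standard Lagrange-type inversion yields the same binomial difference; but the reflection argument is more in the spirit of the paper and more transparent.
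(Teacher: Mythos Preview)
Your strategy is sound and genuinely different from the paper's, but one step is not yet right. The paper does not reflect at all: it reads the expression as the $2\times 2$ determinant $\binom{n}{j}\binom{n}{j+k}-\binom{n}{j-1}\binom{n}{j+k+1}$, invokes the Lindstr\"om--Gessel--Viennot lemma to get pairs of non-intersecting north/east lattice paths, and then writes down a step-by-step dictionary between such a pair and a single bi-coloured Motzkin prefix (the pair (north,north) becomes an east step of type~1, (east,east) becomes type~2, (north,east) becomes a north-east step, (east,north) becomes a south-east step), checking that non-intersection translates into non-negativity and that the statistic $j$ is read off one of the two paths. Your ``ascent-type / descent-type'' dichotomy is exactly this pair-of-paths encoding in disguise: if you record for each step the two bits (ascent-type vs.\ descent-type, and $U$-or-$L_1$ vs.\ $D$-or-$L_2$), the two bit-strings are independent, the first has $j$ descent-bits and the second has $j+k$ bits of the other kind, which immediately gives the free count $\binom{n}{j}\binom{n}{j+k}$ that you presently only assert. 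Making that encoding explicit would close the first loose end and shows why the LGV and reflection routes are cousins.

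The genuine gap is in the reflection bookkeeping. Your sentence ``unrestricted paths ending at height $-k-2$ having $j-1$ descent-type steps'' is internally inconsistent: if you reflect the tail after the first visit to $-1$ by swapping $U\leftrightarrow D$, the endpoint does become $-k-2$, but the descent-type count becomes $j+k+1$, not $j-1$; if instead you reflect the initial segment up to the first visit to $-1$, the endpoint becomes $k+2$ and the descent-type count becomes $j-1$. Either version yields the product $\binom{n}{j-1}\binom{n}{j+k+1}$, but you must commit to one and track $j$ honestly. Moreover, your stated reason for leaving the level-step colours untouched (``a level step is never the step that first reaches height $-1$'') is beside the point: the reflection acts on an entire segment, which certainly contains level steps. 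The correct reason to fix $L_1,L_2$ and swap only $U\leftrightarrow D$ is that over a segment with net height change $-1$ this swap changes $d+\ell_1$ by exactly $u_0-d_0=-1$, which is precisely the shift $j\mapsto j-1$ you need. With that correction your reflection proof goes through; the paper's LGV route simply outsources this cancellation to the determinant.
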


\begin{proof}
  We note that the formula can be seen as a $2\times 2$ determinant.
  By the Lindstr\"om-Gessel-Viennot lemma, this equals the number of
  pairs of non-intersecting lattice paths taking north and east steps
  from $(1,0)$ to $(n-j,j)$ and $(0,1)$ to $(n-j-k,j+k)$ respectively.

  We transform such a pair of paths step by step into a single Motzkin
  path according to the following translation table:

  \begin{center} 
    \begin{tabular}{l|l|l|l} 
      $i$\textsuperscript{th} step of & lower path & upper path & Motzkin path \\
      \hline
      & north      & north      & east type~1\\
      & east       & east       & east type~2\\
      & north      & east       & north-east\\
      & east       & north      & south-east.
    \end{tabular} 
  \end{center}

  It is easy to see that the condition that the two lattice paths do
  not intersect corresponds to the condition that the Motzkin path
  does not run below the $x$-axis.  Furthermore, we see that the
  number of east and south-east steps equals $j$, the number of north
  steps of the lower path.
\end{proof}

\begin{prop}
  The generating polynomial $M_k$ equals $\sum_{i=0}^k y^i
  q^{i(k+1-i)}$.
\end{prop}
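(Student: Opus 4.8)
The defining conditions of $M(k)$ leave no freedom for the east steps: an east step of type~$1$ at height $i$ must take the weight $-yq^{i+1}$, and an east step of type~$2$ at height $i$ must take the weight $-q^i$. Hence a path in $M(k)$ is nothing but a bi-coloured Motzkin shape together with a choice, at each north-east step (between $y$ and $-yq^{i+1}$) and at each south-east step (between $1$ and $-q^i$), subject only to the condition that no north-east step of weight $y$ is immediately followed by a south-east step of weight~$1$. I would first single out the \emph{trapezoidal} paths: ascend from height $0$ to height $a:=\min(i,k-i)$ by heavy north-east steps, remain at height $a$ for $|k-2i|$ heavy east steps (all of type~$2$ if $2i\le k$, all of type~$1$ if $2i\ge k$), then descend to $0$ by heavy south-east steps. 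A direct computation shows that such a path has length $k$ and contributes the monomial $y^i q^{i(k+1-i)}$ up to a global sign $(-1)^k$; summing over $i=0,\dots,k$ and absorbing this sign as in Theorem~\ref{main} produces the claimed formula. So the content of the proposition is that the contributions of all remaining paths in $M(k)$ cancel.

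The approach I would try first, following Penaud~\cite{JGP}, is a weight-reversing involution on the non-trapezoidal paths whose fixed points are exactly the trapezoidal ones. The elementary moves are of the type: at a north-east/south-east arch, absorb into the arch, or release from it, a neighbouring east step of type~$2$, while simultaneously toggling the arch's north-east step between its light weight $y$ and its heavy weight $-yq^{i+1}$; since absorbing the east step multiplies its weight by $q$ and the toggling multiplies the north-east weight by $-q^{-1}$, each such move changes the weight of the path by a factor $-1$. Related moves trade a run of heavy east steps of type~$1$ at height $0$ against a heavy arch, and so on. The hard part --- and, as the introduction records, the step that is still missing for a completely combinatorial proof --- is to assemble these moves into one globally well-defined involution: one has to prescribe canonically which arch and which east step are acted upon, and to check that the only paths fixed by the procedure are the trapezoidal ones.

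Failing that, $M_k$ can be obtained semi-combinatorially. Let $A^{(h)}_k$ be the generating polynomial of length-$k$ paths from height $h$ to height $h$ that stay weakly above $h$ and meet the $M(k)$-conditions (weights evaluated at the true heights), so that $M_k=A^{(0)}_k$. A first-return decomposition --- separating an east step at height $h$ from a north-east step opening an arch, and handling an empty arch (a peak, where one must subtract the weight $y$ of the forbidden configuration) on its own --- gives
\begin{equation*}
A^{(h)}_k=-\bigl(yq^{h+1}+q^{h}\bigr)A^{(h)}_{k-1}-y\,A^{(h)}_{k-2}+y\bigl(1-q^{h+1}\bigr)^{2}\sum_{j=0}^{k-2}A^{(h+1)}_{j}\,A^{(h)}_{k-2-j},
\end{equation*}
equivalently a continued fraction for $\sum_k A^{(h)}_kz^k$. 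One then proves the formula by induction on $k$, or, equivalently, establishes the formal power series identity $\sum_{k\ge 0}M_k z^k=\sum_{i\ge 0}(-yqz)^i/(1+q^iz)$ and extracts the coefficient of $z^k$. I expect the main obstacle on this route to be that the weight $y$ of the excluded peak does not depend on the height, so the family of continued fractions does not simply rescale under $h\mapsto h+1$; carrying the induction through therefore requires some care with the auxiliary quantities $A^{(h)}_k$ for $h\ge1$.
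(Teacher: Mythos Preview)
Your write-up is a thoughtful survey of strategies, but it is not a proof: both routes you describe are left open. The involution route you yourself identify as the missing combinatorial step mentioned in the introduction, and for the first-return route you set up the recurrence for $A^{(h)}_k$ correctly but do not solve it---you only state that an induction ``requires some care'' because the peak-correction $-y$ does not rescale under $h\mapsto h+1$. The Lambert-type identity $\sum_{k\ge0}M_kz^k=\sum_{i\ge0}(-yqz)^i/(1+q^iz)$ is also asserted rather than proved; once one has it, extracting the coefficient of $z^k$ is of course immediate, but establishing it is the entire content of the proposition.

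The paper's proof is genuinely different from either sketch and, in particular, sidesteps exactly the obstacle you anticipate. Instead of indexing by the starting height $h$, one introduces a \emph{catalytic variable} $z$ that marks each ``heavy'' step, so that raising the height by one corresponds to $z\mapsto qz$. The first-return decomposition then yields a single functional equation
\[
M(z)=1-(qyzt+zt+yt^2)\,M(z)+yt^2(1-qz)^2\,M(z)\,M(qz),
\]
in which your troublesome $-y$ peak-term appears as the $z$-independent constant $-yt^2$. The linearising Ansatz $M(z)=\dfrac{1}{1-z}\dfrac{H(qz)}{H(z)}$ turns this into a linear $q$-difference equation for $H$, solved by $H(z)={}_2\phi_1(-t,-tqy;t^2qy;q,z)$. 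One cannot set $z=1$ directly because of the pole, so one applies Heine's transformation to rewrite $M(z)$ with the singular factor explicit, obtaining $M(1)=\sum_{n\ge0}(-tqy)^n/(1+tq^n)$; expanding and reading off the coefficient of $t^k$ gives the formula. Note that this last expression is precisely your Lambert series (with $t$ playing the role of your $z$), so the paper's argument is, in effect, the missing proof of the identity you wanted to quote.
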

\begin{proof} 
  We add an extra parameter on the paths in $M(n)$, that marks the
  number of steps that have a weight different from $1$ and $y$.  More
  precisely, the weight of steps starting at height $i$ is
  \begin{itemize}
  \item $y$ or $-yzq^{i+1}$ for north-east steps or east steps of
    type~1, and
  \item $1$ or $-zq^i$ for south-east steps or east steps of type~2.
  \end{itemize}

  Let $M(z)=\sum_{n\ge 0}t^n \sum_{p\in M(n)}w(p)$.  We can obtain a
  functional equation for $M(z)$ by considering the following
  decomposition: A path is either (a) empty, (b) a north-east step of weight $1$, 
  followed by a path, followed by a south east step of weight $-qyz$, followed by
  another path, (c) a north-east step of weight $-qz$, followed by a path,
  followed by a south east step of weight $-qyz$, followed by another path,
  (d) a north-east step of weight $-qz$, followed by a path,
  followed by a south east step of weight $y$, followed by another
  path, (e) a north-east step of weight $1$, followed by a
  \emph{non-empty} path, followed by a south east step of weight
  $y$, followed by another path, (f) an east step of type~1 followed by another path,
  or (g) an east step of type~2 followed by a path. The corresponding weight is  
  (a) $1$, (b) $-M(qz)qyzM(z)t^2$, (c) $qzM(qz)qyzM(z)t^2$, (d) $-qzM(qz)yM(z)t^2$,
  (e) $\left(M(qz)-1\right)yM(z)t^2$, (f) $-qyzM(z)t$, or (g)
  $-zM(z)t$, respectively.
%  \begin{itemize}
%  \item[] empty, \hfill {\bf total weight:}\makebox[4.12cm][r]{$1$}
%  \item[] a north-east step of weight $1$, followed by a path,
%    followed by a south east step of weight $-qyz$, followed by
%    another path, \hfill {\bf total weight:}
%    \makebox[4cm][r]{$-M(qz)qyzM(z)t^2$}
%  \item[] a north-east step of weight $-qz$, followed by a path,
%    followed by a south east step of weight $-qyz$, followed by
%    another path, \hfill {\bf total weight:}
%    \makebox[4cm][r]{$qzM(qz)qyzM(z)t^2$}
%  \item[] a north-east step of weight $-qz$, followed by a path,
%    followed by a south east step of weight $y$, followed by another
%    path, \hfill {\bf total weight:}
%    \makebox[4cm][r]{$-qzM(qz)yM(z)t^2$}
%  \item[] a north-east step of weight $1$, followed by a
%    \emph{non-empty} path, followed by a south east step of weight
%    $y$, followed by another path, \hfill {\bf total weight:}
%    \makebox[4cm][r]{$\left(M(qz)-1\right)yM(z)t^2$}
%  \item[] an east step of type~1 followed by another path, \hfill {\bf
%      total weight:} \makebox[4cm][r]{$-qyzM(z)t$}
%  \item[] or an east step of type~2 followed by a path. \hfill {\bf
%      total weight:} \makebox[4cm][r]{$-zM(z)t$}
%  \end{itemize}
  Thus, we have:
  $$
  M(z)=1-(qyzt+zt+yt^2)M(z)+yt^2(1-qz)^2M(z)M(qz)\;.
  $$
  Proceeding similar to \cite{PB}, we use the linearising Ansatz
  $$
  M(z)=\frac1{1-z}\frac{H(qz)}{H(z)}
  $$
  to obtain
  $$
  H(z)-(1+yt^2)H(qz)+yt^2H(q^2z)=z\left(H(z)+(1+qy)tH(qz)+qyt^2H(q^2z)\right).
  $$
  Solving recursively for the coefficients $c_n$ of $H(z)=\sum_{n=0}^\infty c_nz^n$,
  we obtain a solution in terms of a basic hypergeometric series,
  $$
  H(z)= {}_2\phi_1(-t,-tqy;t^2qy;q,z)
  =\sum_{n=0}^\infty\frac{(-t,-tqy;q)_n}{(t^2qy,q;q)_n}z^n\;.
  $$
  Note that we are dealing with a series of the type
  ${}_2\phi_1(a,b;ab;q,z)$ where $a=-t$ and $b=-tqy$. In order to take
  the limit $z\to1$, we need to transform using Heine's transformation
  $$
  {}_2\phi_1(a,b,ab;q,z)=\frac{(az,b;q)_\infty}{(ab,z;q)_\infty}{}_2\phi_1(a,z
  ;az;q,b)\;.
  $$
  We find that
  $$
  M(z)=\frac1{1-az}\frac{{}_2\phi_1(a,qz;aqz;q,b)}{{}_2\phi_1(a,z,az;q,b)}
  $$
  and therefore
  $$
  M(1)=\frac1{1-a}{}_2\phi_1(a,q;aq;q,b)=\sum_{n=0}^\infty\frac{b^n}{1-aq^n}\;
  .
  $$
%Incidentally, after expanding we see that
%$$
%M(1)=\sum_{m,n=0}^\infty a^mb^nq^{mn},
%$$
%can be interpreted as the generating function of rectangles counted with
%respect to height, width, and area (if you permit zero height and width).
%Back to our variables, this gives
  Changing back to $a=-t$ and $b=-tqy$,
%$$
%M(1)=\sum_{m,n=0}^\infty(-t)^m(-tqy)^nq^{mn}
%$
%and
  $$
  M_k=(-1)^k[t^k]M(1)=\sum_{m+n=k} y^nq^{n(m+1)}=\sum_{i=0}^ky^iq^{i(k-i+1)}.
  $$
\end{proof}

Combining the previous results, we get a proof of Theorem~\ref{main}.

\section{A second proof using the Matrix Ansatz and rook placements}

For further details about material in this section, see \cite{J}.  One
of the ideas at the origin of this proof is the following. From $D$
and $E$ of the Matrix Ansatz, we define new operators $\hat D$ and $\hat E$ as
$$ \hat D = \frac{q-1}q D + \frac 1q, \qquad \hat E = \frac{q-1}q E + \frac 1q.$$
An immediate consequence is that
\begin{equation} \label{comrel} \hat D \hat E - q \hat E \hat D =
  \frac{1-q}{q^2}, \qquad \bra{W} \hat E = \bra{W}, \qquad
  \hbox{and}\quad \hat D\ket{V} = \ket{V}.
\end{equation}
This commutation relation is somewhat simpler than the
one satisfied by $D$ and $E$, as it has no terms linear in $\hat D$ or $\hat E$. 
%similar to the relation between creation and annihilation operators studied by 
%physicists.  
Moreover, we have $q(y\hat D + \hat E) + (1-q)(yD+E) =
1+y$, for any parameter $y$. Using this identity, we obtain
the following inversion formulae between $(yD+E)^n$ and $(y\hat D+\hat
E)^n$:

\begin{equation} (1-q)^n (yD+E)^n = \sum_{k=0}^n \binom n k
  (1+y)^{n-k} (-1)^k q^k(y\hat D+ \hat E)^k,\quad\hbox{and}
\label{inv1}
\end{equation}
\begin{equation}
q^n (y\hat D+\hat E)^n = \sum_{k=0}^n \binom n k (1+y)^{n-k} (-1)^k  (1-q)^k
(D+E)^k.
\label{inv2}
\end{equation}

In particular, the first formula means that in order to compute the
coefficients of the normal form of $(yD+E)^n$, it is sufficient to compute
the ones of $(y\hat D + \hat E)^k$ for all $0\leq k\leq n$ (as taking the 
normal form is a linear operation).

%\smallskip

Except for a $q$-dependent factor, the operators $\hat D$ and
$\hat E$ are also defined in \cite{USW} and \cite{BECE}.  In the first
reference, Uchiyama, Sasamoto and Wadati used the commutation relation between
$\hat D$ and $\hat E$ to find explicit matrices for these
operators. They derive the eigenvalues and eigenvectors of $\hat
D+\hat E$, and consequently the ones of $D+E$, in terms of orthogonal
polynomials.  In the second reference, Blythe, Evans, Colaiori and
Essler also use these eigenvalues and obtain an integral form for
$\bra{W}(D+E)^n\ket{V}$. They also provide an exact integral-free
formula of this quantity, although quite complicated since it contains
three summations and several $q$-binomial coefficients.

%\smallskip

In this article, instead of working on representations of $\hat D$ and
$\hat E$ and their eigenvalues, we study the combinatorics of the
rewriting in the normal form of $(\hat D+\hat E)^n$, and more generally
$(y\hat D+\hat E)^n$ for some parameter $y$.  In the case of $\hat D$
and $\hat E$, the objects that appear are the {\it rook placements in
  Young diagrams}, long-known by combinatorists since the results of
Kaplansky, Riordan, Goldman, Foata and Sch\"utzenberger (see
\cite{RPS} and references therein). This method is described in
\cite{AV}, and is the same that the one leading to permutation
tableaux or alternative tableaux in the case of $D$ and $E$.

%\medskip

\begin{definition} Let $\lambda$ be a Young diagram. A {\it rook
    placement} of shape $\lambda$ is a partial filling of the cells of
  $\lambda$ with rooks (denoted by a circle $\circ$), such that there
  is at most one rook per row (resp. per column).
\end{definition}

%\smallskip

For convenience, we distinguish with a cross ($\times$) each cell of
the Young diagram that is not below (in the same column) or to the
left (in the same row) of a rook (we are using the French
convention). The number of crosses is an important statistic on rook
placements, which was introduced in \cite{GR} as a generalisation of
the inversion number for permutations. Indeed, if $\lambda$ is a
square of side length $n$, a rook placement $R$ with $n$ rooks may be
visualised as the graph of a permutation $\sigma\in\mathfrak{S}_n$, and in
this interpretation the number of crosses in $R$ is the inversion number of
$\sigma$.

%\smallskip

\begin{definition}
  The weight of a rook placement $R$ with $r$ rooks, $s$ crosses and
  $t$ columns is $w(R) = p^r q^sy^t$, where $p=\frac{1-q}{q^2}$.
\end{definition}

%\smallskip

With the definition of rook placements and their weights we can give
the combinatorial interpretation of $\bra{W}(y\hat D + \hat
E)^n\ket{V}$.

\begin{prop} For any $n$, $\bra{W}(y\hat D + \hat E)^n\ket{V}$ is
  equal to the sum of weights of all rook placements of half-perimeter
  $n$.
\end{prop}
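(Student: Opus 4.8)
The plan is to expand $\bra{W}(y\hat D+\hat E)^n\ket{V}$ by brute-force noncommutative expansion and track, for each monomial that appears, exactly which rook placement it corresponds to. First I would expand $(y\hat D+\hat E)^n$ as a sum over words $w=w_1w_2\cdots w_n$ with each $w_i\in\{y\hat D,\hat E\}$, so that the quantity we want is $\sum_w \bra{W}w\ket{V}$. Each such word contributes a factor $y^{(\#\text{ of }\hat D\text{'s})}$, and what remains is to interpret $\bra{W}w'\ket{V}$, where $w'$ is the word in $\hat D,\hat E$ alone. The natural device is to push all the $\hat E$'s to the left using the commutation relation $\hat D\hat E=q\hat E\hat D+p$ (with $p=\tfrac{1-q}{q^2}$, recalling $\hat E-\hat D=E-D$ has been absorbed — more precisely $\hat D\hat E-q\hat E\hat D=\tfrac{1-q}{q^2}$ from \eqref{comrel}): repeatedly, each time we move an $\hat E$ past a $\hat D$ we either pick up a factor $q$ and keep both letters, or we pick up a factor $p$ and delete both letters. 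Since $\bra{W}\hat E=\bra{W}$ and $\hat D\ket{V}=\ket{V}$ and $\bra{W}\ket{V}=1$, a fully normal-ordered monomial $\hat E^i\hat D^j$ evaluates to $1$. Hence $\bra{W}w'\ket{V}$ is a sum over all ways of carrying out this reduction to the empty word, each weighted by a power of $q$ times a power of $p$.

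The combinatorial heart is to encode such a reduction of the word $w$ as a rook placement. I would read $w$ from, say, left to right: the $\hat E$-letters become the columns and the $\hat D$-letters become the rows of a Young diagram $\lambda$ (this is the standard correspondence, giving a shape of half-perimeter $n$ since there are $n$ letters total). A choice, during the commutation process, of which $\hat D$ gets "annihilated" against which $\hat E$ — i.e. where we apply the $p$-substitution rather than the $q$-substitution — corresponds precisely to choosing which cell of $\lambda$ receives a rook, subject to at most one rook per row and per column (because each $\hat D$ and each $\hat E$ can be consumed at most once). The number of rooks $r$ is then the number of $p$-substitutions, contributing $p^r$. The number of $q$-substitutions — each time an $\hat E$ slides past a $\hat D$ that is \emph{not} the one it annihilates — is exactly the number of cells counted by the cross statistic, contributing $q^s$; here one uses the geometric description of crosses (a cell is crossed iff it has no rook weakly above it in its column and no rook weakly to its left in its row, under the French convention) to match the bookkeeping of the normal-ordering procedure. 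Finally, the $y^t$ with $t$ the number of columns records the $\hat D$-count, matching the $y$'s pulled out at the start. Summing over all words $w$ and all valid reductions gives exactly $\sum_R w(R)$ over rook placements $R$ of half-perimeter $n$.

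The main obstacle will be the precise matching of the cross statistic with the power of $q$ produced by the normal-ordering: one must verify that when an $\hat E$ is commuted leftward past the run of $\hat D$'s lying between it and its annihilation partner (and past the already-normal-ordered block), the number of genuine $q$-factors collected equals the number of crossed cells contributed by the corresponding row/column, with no overcounting or undercounting across different rows and columns. This is a bijective lemma of the same flavour as the classical Kaplansky–Riordan–Goldman–Foata–Sch\"utzenberger rook-polynomial identities and as the treatment in \cite{AV, J}; the cleanest route is probably to fix a canonical order in which to perform the annihilations (e.g. process columns left to right, and within the reduction always annihilate against the nearest available $\hat D$) so that the $q$-exponent becomes manifestly additive over the cells, row by row. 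Once that lemma is in place, the statement follows by summing the weight $p^rq^sy^t$ over all rook placements of the given half-perimeter.
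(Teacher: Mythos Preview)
Your approach is correct and is precisely the one the paper has in mind: the proposition is not proved in the paper itself but is attributed to the normal-ordering/rook-placement correspondence of Varvak~\cite{AV} (and detailed further in~\cite{J}). Expanding $(y\hat D+\hat E)^n$ over words, pushing $\hat E$'s to the left via $\hat D\hat E=q\hat E\hat D+p$, and reading off a rook placement from the pattern of annihilations is exactly that method, and your identification of the ``main obstacle'' (matching the $q$-exponent with the cross statistic) together with the proposed fix (a canonical order of annihilations) is the standard way to complete the argument.

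One genuine slip to correct: you write that ``the $\hat E$-letters become the columns and the $\hat D$-letters become the rows'', but two sentences later that ``the $y^t$ with $t$ the number of columns records the $\hat D$-count''. These are inconsistent. Since the weight is $w(R)=p^rq^sy^t$ with $t$ the number of \emph{columns}, and the factor $y$ is attached to $\hat D$ in $(y\hat D+\hat E)^n$, it must be $\hat D\to$ column and $\hat E\to$ row. This swap also determines the correct orientation of the cross statistic: with the paper's French convention, a cell is crossed when there is no rook above it in its column and no rook to its \emph{right} in its row (your description has the row direction reversed). Once these conventions are aligned, the bijection between reduction histories and rook placements, with $q$-weight equal to the number of crosses, goes through exactly as you outline.
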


The enumeration of rook placements leads to an evaluation of $\bra{W}
(y\hat D+\hat E)^{n-1} \ket{V}$, hence of $\bra{W} (yD+E)^{n-1}
\ket{V}$ via the inversion formula~\eqref{inv1}.

\subsection{Rook placements and involutions}

Given a rook placement $R$ of half-perimeter $n$, we define an
involution $\alpha(R)$ by the following construction: label the
north-east boundary of $R$ with integers from 1 to $n$. This implies that
each column or row has a label between 1 and $n$. If a column, or row, is
labelled by $i$ and does not contain a rook, it is a fixed point of
$\alpha(R)$. Also, if there is a rook at the intersection of column $i$
and row $j$, then $\alpha(R)$ sends $i$ to $j$ (and $j$ to $i$).

%\medskip

Given a rook placement $R$ of half-perimeter $n$, we also define a
Young diagram $\beta(R)$ by the following construction: if we remove
all rows and columns of $R$ containing a rook, the remaining cells
form a Young diagram, which we denote by $\beta(R)$. We also define
$\phi(R) = (\alpha(R),\beta(R))$. See Figure~\ref{phi} for an example.

\begin{figure}[htp]\begin{center}
\psset{unit=3mm}
\begin{pspicture}(-1,0)(5,5) 
\rput(-1.2,2){$R=$}
\psline(0,0)(5,0)  \psline(0,0)(0,5)
\psline(0,1)(5,1)  \psline(1,0)(1,5)
\psline(0,2)(5,2)  \psline(2,0)(2,5)
\psline(0,3)(5,3)  \psline(3,0)(3,4)
\psline(0,4)(4,4)  \psline(4,0)(4,4)
\psline(0,5)(2,5)  \psline(5,0)(5,3)
\rput(0.5,4.5){\small $\times$} \rput(1.5,4.5){\small $\times$}
\rput(0.5,3.5){\small $\circ$}  \rput(1.5,3.5){\small $\times$}
\rput(2.5,3.5){\small $\times$} \rput(3.5,3.5){\small $\times$}
\rput(3.5,2.5){\small $\circ$}  \rput(4.5,2.5){\small $\times$}
\rput(2.5,1.5){\small $\circ$}  \rput(4.5,1.5){\small $\times$}
\rput(1.5,0.5){\small $\times$}  \rput(4.5,0.5){\small $\times$}
\end{pspicture}
\hspace{1.8cm}
\begin{pspicture}(-1.5,-1)(10,2.5)
\rput(-1.5,1){$\phi(R)=\Bigg($}
\psdots(1,0)(2,0)(3,0)(4,0)(5,0)(6,0)(7,0)(8,0)(9,0)(10,0) 
\psarc(3.5,0){2.5}{0}{180}
\psarc(3.5,0){2.5}{0}{180}
\psarc(6.5,0){2.5}{0}{180}
\psarc(6.5,0){2.5}{0}{180}
\psarc(6.5,0){1.5}{0}{180} 
\psarc(6.5,0){1.5}{0}{180}
\rput(10.7,0){,}
\end{pspicture}
\hspace{0.4cm}
\begin{pspicture}(0,-1)(2.2,2)
\psline(0,0)(2,0)\psline(0,0)(0,2)
\psline(0,1)(2,1)\psline(1,0)(1,2)
\psline(0,2)(1,2)\psline(2,0)(2,1)
\rput(2.4,1){$\Bigg)$}
\end{pspicture}
\caption{Example of a rook placement and its image by the map $\phi$\label{phi}.}
\end{center}\end{figure}
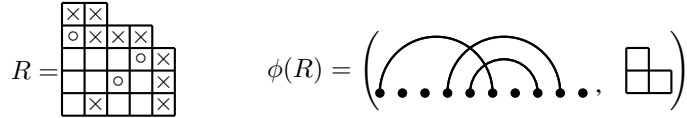

\begin{prop} The map $\phi$ is a bijection between rooks placements in
  Young diagrams of half-perimeter $n$, and ordered pairs $(I,\lambda)$
  where $I$ is an involution on $\{1,\dots,n\}$ and $\lambda$ a Young
  diagram of half-perimeter $|$Fix$(I)|$. If $\phi(R) = (I,\lambda)$,
  the number of crosses in $R$ is the sum of $|\lambda|$ and some
  parameter $\mu(I)$.
\end{prop}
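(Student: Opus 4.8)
The plan is to prove the two parts of the statement in turn: bijectivity of $\phi$, and then the decomposition of the cross-statistic.

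\textbf{Bijectivity.} First I would fix conventions: label the north--east boundary of a diagram from the top-left corner to the bottom-right corner, so that each of the $n$ unit steps is either horizontal (a ``column step'') or vertical (a ``row step''), and a rook in column $i$, row $j$ forces step $i$ horizontal, step $j$ vertical, with $i<j$. It is convenient here to read ``Young diagram of half-perimeter $m$'' in the mild sense of ``NE lattice path with $m$ steps'' (a word in $\{\mathrm{H},\mathrm{V}\}$ of length $m$), since deleting the rook rows and columns of a genuine diagram may strip a leftover free row or column of all its cells, leaving only such a path. The technical heart is a ``no cut-off'' lemma: in any diagram of half-perimeter $n$, the cell in column $i$ and row $j$ exists precisely when $i<j$, step $i$ is horizontal and step $j$ is vertical; this follows by comparing the number of vertical steps occurring before $i$ and before $j$. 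Granting this, the inverse of $\phi$ is explicit: given $(I,\lambda)$, build a word $w\in\{\mathrm{H},\mathrm{V}\}^n$ by setting $w_i=\mathrm{H}$ and $w_j=\mathrm{V}$ for each transposition $(i,j)$ of $I$ with $i<j$, and reading off the successive boundary steps of $\lambda$ at the positions of the fixed points $k_1<\dots<k_m$ of $I$, where $m=|\mathrm{Fix}(I)|$. The word $w$ is the boundary of a diagram $\mu$ of half-perimeter $n$; the no cut-off lemma lets us put a rook in the column labelled $i$ and the row labelled $j$ for each transposition $(i,j)$, and the resulting rook placement $R$ satisfies $\alpha(R)=I$ (by construction) and $\beta(R)=\lambda$ (the free rows and columns of $R$ are exactly the fixed points of $I$, carrying the step-types read from $\lambda$). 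Since the positions of the rooks and of the deleted rows and columns are forced, this construction is a two-sided inverse of $\phi$.

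\textbf{The cross-statistic.} I would partition the cells of $\mu$ into four blocks according to whether the cell's row is free or carries a rook, and likewise for its column. The block (free row, free column) is exactly $\beta(R)=\lambda$, and every cell in it is a cross (its row and column contain no rook), so this block contributes $|\lambda|$. In the block (rook row, rook column) the positions of all cells, rooks, crosses and ``quiet'' cells depend only on the left- and right-endpoints of the arcs of $I$, and a short check identifies the number of crosses here with the crossing number $\mathrm{cr}(I)$ of the involution. In the block (rook row, free column) a cell is crossed iff it lies to the right of the rook of its row, and in (free row, rook column) a cell is crossed iff it lies above the rook of its column; so the numbers of crosses in these two blocks are $\sum_{(i,j)}\#\{\mathrm{H}\text{-type fixed points of }I\text{ in }(i,j)\}$ and $\sum_{(i,j)}\#\{\mathrm{V}\text{-type fixed points of }I\text{ in }(i,j)\}$ respectively, the sums running over transpositions $(i,j)$ with $i<j$. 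The splitting of a fixed point into $\mathrm{H}$-type or $\mathrm{V}$-type depends on $\lambda$, but the \emph{sum} of these two quantities does not, being $\sum_{(i,j)}\#\{k\in\mathrm{Fix}(I):i<k<j\}$. Hence $\#\mathrm{crosses}(R)=|\lambda|+\mu(I)$ with $\mu(I)=\mathrm{cr}(I)+\sum_{(i,j)}\#\{k\in\mathrm{Fix}(I):i<k<j\}$, which depends on $I$ alone, as claimed.

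\textbf{Main obstacle.} Everything above is routine once the set-up is in place; the points needing genuine care are the bookkeeping of the boundary-labelling convention, the no cut-off lemma, and the slightly lenient reading of ``Young diagram of half-perimeter $m$'' required for $\phi$ to be everywhere defined and bijective. The real crux is organising the four-block count so that the dependence on $\lambda$ is manifestly confined to the first block: once the blocks are set up, the cancellation $\#\{\mathrm{H}\text{-type}\}+\#\{\mathrm{V}\text{-type}\}=\#\{\text{fixed points in the interval}\}$ does all the work.
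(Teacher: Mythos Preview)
Your argument is correct and follows the same approach the paper sketches: the paper's proof is only a few lines, citing the bijection as ``classical'' and saying that for the cross count one should ``distinguish different kinds of crosses,'' noting in particular that the crosses with no rook in their row or column are enumerated by $|\lambda|$. Your four-block decomposition is exactly this distinction carried out in full, and your explicit inverse for $\phi$ supplies what the paper defers to the references \cite{Bu,Ke}.

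Where you go beyond the paper is in actually computing $\mu(I)$: the paper is content to observe that the remaining crosses depend only on $I$, whereas you identify the (rook row, rook column) block with $\mathrm{cr}(I)$ and the two mixed blocks with $\sum_{(i,j)}\#\{k\in\mathrm{Fix}(I):i<k<j\}$. The key step---that the split of fixed points into $\mathrm{H}$-type and $\mathrm{V}$-type depends on $\lambda$ but their sum over each arc does not---is the genuine content here and is handled cleanly. Your remark about reading ``Young diagram of half-perimeter $m$'' as an $m$-step NE path is also well taken; the paper uses this convention implicitly throughout (it is needed already for the domain of $\phi$, not just the codomain).
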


\begin{proof} This kind of bijection rather classical, see for
  instance \cite{Bu,Ke}.  Note that the pairs $(I,\lambda)$ may be
  seen as involutions on $\{1,\dots,n\}$ with a weight 2 on each fixed
  point. For the second part of the proposition, we just have to
  distinguish different kinds of crosses in the rook placement
  $R$. For example, the crosses with no rook in the same line and column are
  enumerated by $|\lambda|$.
\end{proof}

\begin{corollary} Let $T_{j,k,n}$ be the sum of weights of rook
  placements of half perimeter $n$, with $k$ lines and $j$ lines
  without rooks. Then for any j,k,n, we have:
\begin{equation}
 T_{j,k,n} = \qbin{n-2k+2j}{j}y^j T_{0,k-j,n}. \label{fact}
\end{equation}
\end{corollary}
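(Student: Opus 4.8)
The plan is to derive \eqref{fact} from the preceding Proposition, i.e.\ from the bijection $\phi$. The point is that once a rook placement $R$ is replaced by the pair $(I,\lambda)=\phi(R)$, the involution $I$ carries exactly the same information on both sides of \eqref{fact}, so that the identity reduces to a statement about the generating function of the Young diagrams $\lambda$, which turns out to be a single Gaussian binomial.

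First I would do the elementary bookkeeping. A rook placement $R$ counted by $T_{j,k,n}$ has $k$ columns, $j$ of them empty, hence exactly $k-j$ rooks; therefore $k-j$ of its rows carry a rook, and since the half-perimeter is $n$ it has $n-k$ rows in all, so $n-2k+j$ empty rows. Under $\phi$ the $k-j$ rooks become the transpositions of $I$, so $I$ is an involution of $\{1,\dots,n\}$ with $k-j$ transpositions and $|\mathrm{Fix}(I)|=n-2k+2j$; moreover $\lambda=\beta(R)$, obtained by deleting the rows and columns that carry a rook, lives in the rectangle whose sides are the $j$ empty columns and the $n-2k+j$ empty rows of $R$, so $\lambda$ is a Young diagram of half-perimeter $n-2k+2j$ with exactly $j$ columns, equivalently a partition contained in the $j\times(n-2k+j)$ rectangle. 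Since the weight of $R$ is $p^{k-j}q^{|\lambda|+\mu(I)}y^{k}$, summing over all such $R$ gives
$$
T_{j,k,n}=p^{k-j}y^{k}\Bigg(\sum_{I}q^{\mu(I)}\Bigg)\Bigg(\sum_{\lambda\subseteq\, j\times(n-2k+j)}q^{|\lambda|}\Bigg),
$$
where $I$ runs over all involutions of $\{1,\dots,n\}$ with $k-j$ transpositions. The inner sum over $\lambda$ is the classical box-partition generating function, namely the Gaussian binomial $\qbin{n-2k+2j}{j}$.

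Running the same computation with $j=0$ and with $k$ replaced by $k-j$: a placement counted by $T_{0,k-j,n}$ has $k-j$ columns, all of them occupied, so again it yields an involution with $k-j$ transpositions and $|\mathrm{Fix}(I)|=n-2k+2j$, while now $\lambda$ must fit in a rectangle of width $0$ and hence $\lambda=\varnothing$. Thus
$$
T_{0,k-j,n}=p^{k-j}y^{k-j}\Bigg(\sum_{I}q^{\mu(I)}\Bigg),
$$
the sum being over the very same set of involutions with the very same weights $q^{\mu(I)}$. Comparing the two displays, the involution sums cancel and we are left with $T_{j,k,n}=\qbin{n-2k+2j}{j}\,y^{j}\,T_{0,k-j,n}$. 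The degenerate ranges $j>k$ or $n-2k+j<0$ cause no trouble: both sides vanish, and so does the Gaussian binomial.

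The step I expect to need the most care is the identification ``$R$ has $j$ empty columns $\iff$ $\lambda=\beta(R)$ sits in the $j\times(n-2k+j)$ rectangle'': one has to read off, from the construction of $\phi$ in the previous Proposition, which labels of the north-east boundary of $R$ become columns of $\beta(R)$ and which become rows, and check that the resulting bounding box is exactly this rectangle, so that the $\lambda$-sum is a genuine single Gaussian binomial and not merely a sum over the sub-shapes of some staircase. Everything else is routine.
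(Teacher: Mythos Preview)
Your proof is correct and follows exactly the approach sketched in the paper: the paper's own proof is only two sentences, merely pointing out that the additivity of the cross statistic under $\phi$ yields a factorisation of the generating function, and your argument is precisely the natural way to flesh this out. Your bookkeeping (reading ``lines'' as columns, so that $R$ has $k-j$ rooks and $|\mathrm{Fix}(I)|=n-2k+2j$) and your identification of the $\lambda$-sum with the Gaussian binomial $\qbin{n-2k+2j}{j}$ are both right; the only point worth noting is that here a ``Young diagram of half-perimeter $m$'' is really a boundary path of length $m$, so the $j$ empty columns and $n-2k+j$ empty rows are remembered even when degenerate, which is exactly why the $\lambda$-sum ranges over all partitions in the $j\times(n-2k+j)$ box and gives the Gaussian binomial on the nose.
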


\begin{proof} The previous proposition means that the number of
  crosses is an additive parameter with respect to the decomposition
  $R\mapsto (I,\lambda)$. This naturally lead to a factorisation of
  the generating function.
\end{proof}

\subsection{The recurrence}

\begin{prop} We have the following recurrence relation:
\begin{equation}
  T_{0,k,n} = T_{0,k,n-1} + py [n+1-2k]_qT_{0,k-1,n-1}. \label{T0rec}
\end{equation}
\end{prop}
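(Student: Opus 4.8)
The plan is to prove (\ref{T0rec}) by a weight-preserving bijective decomposition of the rook placements counted by $T_{0,k,n}$, according to the first step of the north-east boundary. Recall that a rook placement of half-perimeter $n$ with $k$ columns is the data of its north-east boundary — a word $w$ of length $n$ in two letters, with $k$ ``column letters'' and $n-k$ ``row letters'' — together with a filling of the Young diagram bounded by $w$ inside the $k\times(n-k)$ box; the condition defining $T_{0,k,n}$ is that each of the $k$ columns carries a rook, which forces the bottom row of the box to have length $k$ and forces exactly $k$ of the $n-k$ rows to carry a rook. I would split $T_{0,k,n}$ into two classes according to whether $w$ begins with a row letter or a column letter.

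First, suppose $w$ begins with a row letter. Then the topmost row of the box is not even reached by the diagram, so it contains no rook and no cross; deleting it (that is, deleting the first letter of $w$) produces a rook placement of half-perimeter $n-1$, still with $k$ columns each carrying a rook, hence an element of $T_{0,k,n-1}$ of the same weight. Conversely one simply re-adds an empty top row to the bounding box. This weight-preserving bijection accounts for the summand $T_{0,k,n-1}$.

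Next, suppose $w$ begins with a column letter. Then the leftmost column has full height $n-k$ and carries exactly one rook, in some row $\rho$. Deleting this column together with its rook (delete the first letter of $w$ and strip the leftmost cell off every row) yields a rook placement $R_0$ of half-perimeter $n-1$ with $k-1$ columns, each still carrying a rook, so $R_0\in T_{0,k-1,n-1}$. On weights this introduces a factor $p$ for the lost rook, a factor $y$ for the lost column, and a factor $q^e$, where $e$ is the number of rows strictly above $\rho$ that are rook-free: indeed the only crosses removed are the cells of the deleted column lying above its rook in a rook-free row, and one checks that no cross elsewhere is created or destroyed. Going backwards, from $R_0\in T_{0,k-1,n-1}$ one prepends a new full-height leftmost column and puts its rook in any of the $n-2k+1$ rook-free rows of $R_0$ (including any top rows of the bounding box that $R_0$ leaves empty); as $\rho$ runs over these rows from the bottom up, $e$ runs through $\{0,1,\dots,n-2k\}$, so this class contributes $py\,(1+q+\dots+q^{n-2k})\,T_{0,k-1,n-1}=py\,[n+1-2k]_q\,T_{0,k-1,n-1}$. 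Adding the two contributions gives (\ref{T0rec}).

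The only genuine obstacle is the bookkeeping around ``empty lines'': one must treat the north-east boundary word as part of the data, so that the diagram may fail to fill the rows (or columns) of its bounding box, and such unreached rows still count toward the half-perimeter while carrying neither rooks nor crosses. This is exactly what makes the deletion of the topmost row legitimate in the first case, and what makes the number of admissible positions for the re-inserted rook equal to $n-2k+1$ (not $n-2k$) in the second case; once this is in place, the verification that $e$ is the number of rook-free rows above $\rho$ and that the remaining crosses are untouched is routine. (Alternatively, the recurrence can be read off directly from the action of $y\hat D+\hat E$ via the explicit matrices of Proposition~\ref{prop:solution}, but the combinatorial argument above is cleaner.)
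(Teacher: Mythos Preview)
Your argument is correct. It is close in spirit to the paper's proof but organised a little differently. The paper first establishes the intermediate relation
\[
T_{0,k,n}=T_{0,k,n-1}+p\,T_{1,k,n-1}
\]
by a two-case split on the first column, and only then invokes the previously proved factorisation~(\ref{fact}) (with $j=1$) to convert $T_{1,k,n-1}$ into $y\,[n+1-2k]_q\,T_{0,k-1,n-1}$. You instead split on the first letter of the boundary word and, in the column case, delete the full-height leftmost column together with its rook, landing directly in $T_{0,k-1,n-1}$ while tracking the rook's row among the $n-2k+1$ rook-free rows to produce the factor $[n+1-2k]_q$ in one stroke. Your route is thus more self-contained --- it does not appeal to~(\ref{fact}), effectively reproving its $j=1$ instance by hand --- whereas the paper's route is shorter because that work was already packaged into the preceding Corollary. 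Your careful remark about allowing empty rows in the bounding box (so that the first boundary letter may be a row letter, and so that $R_0$ genuinely has $n-2k+1$ rook-free rows) is exactly the point one must get right, and you handle it correctly.
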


\begin{proof} We have the relation:
\begin{equation}
  T_{0,k,n} = T_{0,k,n-1} + p T_{1,k,n-1}. %\label{T0rec}
\end{equation}
Indeed, we can distinguish two cases, whether a rook placement
enumerated by $ T_{0,k,n}$ has a rook in its first column or
not. These two cases give respectively the two terms of the previous
identity. To end the proof we can apply identity (\ref{fact}) to the
second term.
\end{proof}

The recurrence~\eqref{T0rec} is solved by the following formula.

\begin{prop} %
\begin{equation}
  T_{0,k,n}  = q^{-2k} \sum_{i=0}^k (-1)^i q^{\frac {i(i+1)} 2} 
  {n-2k+i \brack i}_q \left( \binom n{k-i} - \binom n{k-i-1} \right). \label{T0kn}
\end{equation}
\end{prop}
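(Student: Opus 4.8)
The plan is to verify the closed form~\eqref{T0kn} by checking that it satisfies the recurrence~\eqref{T0rec} together with the correct initial conditions, since~\eqref{T0rec} determines $T_{0,k,n}$ uniquely once we fix $T_{0,0,n}$ and the boundary behaviour in $k$. The base case is easy: for $k=0$ the right-hand side of~\eqref{T0kn} collapses to the single term $i=0$, giving $\binom n0-\binom n{-1}=1$, which matches the fact that the only rook placement with no lines is the empty diagram, of weight $1$. One should also record that $T_{0,k,n}=0$ unless $0\le k\le\lfloor n/2\rfloor$ (a rook placement of half-perimeter $n$ with $k$ lines has $k$ rows and $k$ columns, hence $2k\le n$); the proposed formula should be checked to vanish appropriately at the edge, or one can simply run the induction on $n$ for all $k$ simultaneously.

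The main computation is then to substitute~\eqref{T0kn} into~\eqref{T0rec}. Write $A_i^{(n,k)} = (-1)^i q^{i(i+1)/2}\,{n-2k+i\brack i}_q\bigl(\binom n{k-i}-\binom n{k-i-1}\bigr)$, so that $q^{2k}T_{0,k,n}=\sum_i A_i^{(n,k)}$. After multiplying~\eqref{T0rec} through by $q^{2k}$ and using $p y\,[n+1-2k]_q\cdot q^{2k} = q^{2k}py[n+1-2k]_q$ (note the shift: the term $T_{0,k-1,n-1}$ carries $q^{-2(k-1)}=q^{-2k+2}$), the identity to prove becomes a relation among sums of the $A_i$'s with parameters $(n,k)$, $(n-1,k)$ and $(n-1,k-1)$. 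The natural route is to combine the three sums into a single sum over $i$ and show the summand telescopes or vanishes term by term. Two standard tools will do the bookkeeping: the $q$-Pascal recurrences ${m\brack i}_q={m-1\brack i}_q+q^{m-i}{m-1\brack i-1}_q={m-1\brack i-1}_q+q^{i}{m-1\brack i}_q$ for splitting the $q$-binomial ${n-2k+i\brack i}_q$, and the ordinary Pascal rule $\binom n{k-i}=\binom{n-1}{k-i}+\binom{n-1}{k-i-1}$ for splitting the binomial differences. The factor $[n+1-2k]_q=\frac{1-q^{n+1-2k}}{1-q}$ should be absorbed by writing ${n-2k+i\brack i}_q[n+1-2k]_q$ in terms of ${n+1-2k+i\brack i}_q$ via ${m+1\brack i}_q - {m\brack i}_q = q^{m+1-i}{m\brack i-1}_q$, which is exactly what produces the index shift $i\mapsto i-1$ needed to match the $\binom{n-1}{(k-1)-i}$ appearing in the $T_{0,k-1,n-1}$ term.

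The step I expect to be the genuine obstacle is the reindexing at the end: after all the Pascal-type expansions, one obtains a single sum in which the powers of $q$ coming from $q^{i(i+1)/2}$ versus $q^{(i-1)i/2}$ differ, and from the $q^{m+1-i}$ factors, must all be reconciled so that the coefficient of each monomial $\binom{n-1}{k-1-i}$ (resp. $\binom{n-1}{k-i}$, etc.) cancels identically. Getting the exponent arithmetic to line up — in particular seeing that the spurious cross-terms produced by the two different $q$-Pascal splittings cancel against each other — is the only place where care is required; everything else is mechanical. An alternative, if the direct telescoping proves messy, is to pass to generating functions in $n$ (or to use the substitution $T_{0,k,n}\mapsto q^{2k}T_{0,k,n}$ and recognise the resulting array as a known $q$-analogue of a ballot-type number), but I would first attempt the direct inductive verification, as the recurrence~\eqref{T0rec} is simple enough that it should succeed.
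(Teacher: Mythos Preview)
Your approach is exactly the paper's: it offers no argument beyond the sentence ``The recurrence~\eqref{T0rec} is solved by the following formula,'' so verifying~\eqref{T0kn} against~\eqref{T0rec} together with the base case $T_{0,0,n}=1$ is precisely what is intended. Your identification of the tools---$q$-Pascal for ${n-2k+i\brack i}_q$, ordinary Pascal for the binomial difference, and the simplification $q^{2}\,p\,[n+1-2k]_q = 1-q^{\,n+1-2k}$ after clearing the prefactor $q^{-2k}$---is correct, and the telescoping goes through as you outline.
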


From this proposition, identity~\eqref{fact}, and a $q$-binomial identity, we derive a formula for
$T_{j,k,n}$.

\begin{prop} 
$$
\sum_{j=0}^k T_{j,k,n} = \sum_{j=0}^k  \left(\tbinom nj - \tbinom n{j-1}\right)
\left(  \tfrac{ q^{(k+1-j)(n-k-j)} - q^{(k-j)(n-k-j)} 
 + q^{(k-j)(n+1-k-j)} - q^{(k+1-j)(n+1-k-j)} }{(1-q)q^n} \right).
\label{Tkn}
$$
\end{prop}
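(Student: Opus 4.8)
The plan is to combine the explicit formula for $T_{0,k,n}$ from the previous proposition with the factorisation~\eqref{fact}, perform the sum over $j$ using a $q$-binomial identity, and then recognise the result in the claimed closed form. Concretely, by~\eqref{fact} we have
$$
\sum_{j=0}^k T_{j,k,n} = \sum_{j=0}^k \qbin{n-2k+2j}{j} y^j\, T_{0,k-j,n},
$$
and substituting the formula~\eqref{T0kn} for $T_{0,k-j,n}$ turns the right-hand side into a double sum over $j$ and the internal index $i$ coming from~\eqref{T0kn}. The key simplification is that the inner geometric-type sum over one of these indices collapses: after reindexing, the dependence on one summation variable is through a product of $q$-binomial coefficients of the shape $\qbin{n-2k+2j}{j}\qbin{(n-2(k-j))-2(k-j)+i}{i}$ evaluated against binomials $\binom{n}{k-j-i}-\binom{n}{k-j-i-1}$, and this is exactly the situation in which the $q$-Vandermonde / $q$-Chu–Vandermonde identity applies.

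**First I would** set $\ell = j+i$ so that the pair $(j,i)$ is replaced by $(\ell, j)$ with $0\le j\le \ell\le k$, collect all $q$-powers and the $q$-binomials that depend on $j$ for fixed $\ell$, and check that the resulting $j$-sum is a single instance of the $q$-Chu–Vandermonde sum $\sum_j q^{\binom{j}{2}}\,(\dots)\qbin{a}{j}\qbin{b}{\ell-j} = \qbin{a+b}{\ell}$ up to an overall monomial and a simple power of $q$. This telescopes the double sum down to a single sum over $\ell$ (which I would then rename back to $j$), whose summand is $\bigl(\binom{n}{\ell}-\binom{n}{\ell-1}\bigr)$ times a $q$-power expression. **The remaining step** is purely a power-of-$q$ bookkeeping check: one must verify that the $q^{-2k}$ prefactor from~\eqref{T0kn}, the exponents $\tfrac{i(i+1)}2$, the exponents hidden inside $\qbin{n-2k+2j}{j}$, and whatever $q^{\binom{j}{2}}$-type factor the Chu–Vandermonde step produces, all combine to give precisely
$$
\tfrac{q^{(k+1-j)(n-k-j)} - q^{(k-j)(n-k-j)} + q^{(k-j)(n+1-k-j)} - q^{(k+1-j)(n+1-k-j)}}{(1-q)q^{n}}.
$$
The four-term numerator here is a strong hint: the $\qbin{n-2k+2j}{j}$ factor carries a denominator $(q;q)_j$, and when the $i$-sum collapses one is left with something like $\tfrac{1-q^{\,k+1-j}}{1-q}$ (a $q$-integer) times a monomial, and the two-term numerator $q^{a}-q^{a+(k+1-j)}$ splits exactly into the first pair of terms above, with the shift $n\mapsto n+1$ producing the second pair; the $\binom{n}{j}-\binom{n}{j-1}$ versus the pair $\binom{n}{k-i}-\binom{n}{k-i-1}$ matching is what forces the alternation in sign.

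**The main obstacle** I anticipate is not the structural argument (identifying the Chu–Vandermonde collapse is routine once the summand is written out) but rather pinning down the exact $q$-exponent arithmetic: the formula~\eqref{T0kn} has a $q^{-2k}$ out front and a $q^{\frac{i(i+1)}2}$ inside, the factorisation~\eqref{fact} contributes the exponents implicit in $\qbin{n-2k+2j}{j}$, and the target numerator is a difference of four separate $q$-powers with quadratic exponents in $j$, $k$, $n$ — so a single off-by-one in any of the half-integer quadratic exponents will make the identity fail. I would therefore carry out this verification by first checking small cases (say $k=1$ and $k=2$, general $n$) symbolically to fix all the exponent conventions, and only then write the general telescoping computation. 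An alternative, possibly cleaner, route would be to bypass~\eqref{T0kn} entirely and instead prove the stated formula for $\sum_j T_{j,k,n}$ directly from the recurrence~\eqref{T0rec} together with~\eqref{fact}, by induction on $n$; but since the previous proposition already hands us the closed form for $T_{0,k,n}$, summing it is the more economical path and is the one I would pursue first.
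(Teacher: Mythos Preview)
Your plan is the same as the paper's: the text preceding the proposition says only that the formula is derived ``from this proposition [the closed form~\eqref{T0kn}], identity~\eqref{fact}, and a $q$-binomial identity,'' and that is exactly the skeleton you describe---substitute~\eqref{T0kn} into the factorisation~\eqref{fact}, reindex, and collapse the inner sum. So at the level of strategy you are on target.

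Two small cautions. First, a transcription slip: the inner $q$-binomial coming from~\eqref{T0kn} with $k$ replaced by $k-j$ is $\tqbin{n-2(k-j)+i}{i}$, not $\tqbin{(n-2(k-j))-2(k-j)+i}{i}$ as you wrote. Second, and more substantively, once you set $\ell=i+j$ the product of $q$-binomials becomes
\[
\qbin{n-2k+2j}{j}\qbin{n-2k+2j+i}{i}=\qbin{n-2k+j+\ell}{\ell}\qbin{\ell}{j},
\]
so the upper index of the first factor still depends on the inner summation variable. This is not the shape $\tqbin{a}{j}\tqbin{b}{\ell-j}$ of the plain $q$-Chu--Vandermonde sum you quote; the identity you actually need is the closely related alternating sum of the form $\sum_{m}(-1)^m q^{\binom{m}{2}+\cdots}\tqbin{\ell}{m}\tqbin{A-m}{\ell}$, which does evaluate in closed form but is a different member of the $q$-Vandermonde family. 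Your instinct to nail down the exact identity on small cases before writing the general computation is the right one; just do not assume in advance that the standard Chu--Vandermonde template will fit without modification.
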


Summing this identity over $k$ gives the following result.

\begin{prop} \label{propTn} 
\begin{equation}  \bra{W}(y\hat D+\hat E)\ket{V} = (1+y) G(n) - G(n+1), 
\label{Tn}
\end{equation}  
$$\hbox{where \quad } G(n)=\sum_{j=0}^{\lfloor \frac n2 \rfloor} 
 \left( \binom nj - \binom n{j-1} \right) \sum_{i=0}^{n-2j} y^{i+j-1} q^{i(n+1-2j-i)}.$$
\end{prop}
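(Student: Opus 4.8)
The plan is to perform the summation over $k$ announced above. By the proposition identifying $\bra{W}(y\hat D+\hat E)^n\ket{V}$ with the sum of weights of all rook placements of half-perimeter $n$, and sorting those placements by their number $k$ of lines (the ones counted by $y$) and the number $j$ of such lines containing no rook, one has $\bra{W}(y\hat D+\hat E)^n\ket{V}=\sum_{k}\sum_{j=0}^{k}T_{j,k,n}$. First I would substitute the closed form for $\sum_{j=0}^{k}T_{j,k,n}$ from the previous proposition and interchange the two summations, so that for each fixed $j$ one is left with the sum over $k$ of $q^{(k-j+1)(n-k-j)}-q^{(k-j)(n-k-j)}+q^{(k-j)(n+1-k-j)}-q^{(k-j+1)(n+1-k-j)}$, carrying the weight $\binom nj-\binom n{j-1}$, a power of $y$ depending only on $j$, and the overall factor $\tfrac1{(1-q)q^n}$.

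The key observation is that this inner sum is a discrete difference of the building blocks of $G$. Writing $P_m(j)=\sum_{i=0}^{m-2j}y^iq^{i(m+1-2j-i)}$, so that $G(m)$ is, up to the normalising factor, $\sum_{j}\bigl(\binom mj-\binom m{j-1}\bigr)y^{j-1}P_m(j)$, one checks by a direct expansion of $q$-powers — substituting $i=k-j+1$ — that the four-term expression displayed above is exactly the coefficient of $y^i$ in $(1+y)P_n(j)-P_{n+1}(j)-yP_{n-1}(j)$. Hence for each $j$ the sum over $k$ collapses to $(1+y)P_n(j)-P_{n+1}(j)-yP_{n-1}(j)$, and it remains to reassemble the outer sum. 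Restoring the factor $y^{j-1}$, the term $(1+y)P_n(j)$ produces $(1+y)G(n)$, while the other two terms fold into $G(n+1)$ via Pascal's rule $\binom{n+1}j-\binom{n+1}{j-1}=\bigl(\binom nj-\binom n{j-1}\bigr)+\bigl(\binom n{j-1}-\binom n{j-2}\bigr)$ together with the index shift $j\mapsto j+1$, which identifies $\sum_{j}\bigl(\binom nj-\binom n{j-1}\bigr)y^{j}P_{n-1}(j)$ with the contribution of the ``new'' binomials in $G(n+1)$. This gives $(1+y)G(n)-G(n+1)$, which is \eqref{Tn}.

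The main obstacle, as I see it, is the bookkeeping of summation ranges and of the powers of $q$ and $1-q$ rather than any conceptual point. The sum over $k$ is finite, since a rook placement of half-perimeter $n$ has a bounded number of lines, whereas the three series $P_n(j)$, $P_{n+1}(j)$, $P_{n-1}(j)$ have different lengths; after the reindexing $i=k-j+1$ one has to check that the $y^0$-coefficient of $(1+y)P_n(j)-P_{n+1}(j)-yP_{n-1}(j)$ vanishes and that its top coefficient — a single term of the shape $q^{n-2j}(1-q)$ — is precisely what the combinatorial side contributes at the largest admissible $k$. One should likewise carry along the powers of $q$ and the factors $1-q$ inherited from identities \eqref{fact} and \eqref{T0rec} (already used to obtain the formula for $\sum_{j=0}^{k}T_{j,k,n}$), and verify that the apparent pole $y^{-1}$ cancels between $(1+y)G(n)$ and $G(n+1)$ so that the final expression is a genuine polynomial. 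Apart from these routine but somewhat delicate verifications, the whole argument is a manipulation of $q$-binomial identities.
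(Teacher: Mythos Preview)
Your approach is exactly the paper's: there the proof is literally the single sentence ``Summing this identity over $k$ gives the following result,'' and you have filled in the computation along the intended lines, via the reindexing $i=k-j+1$, the observation $P_{n+1}(j+1)=P_{n-1}(j)$, and Pascal's decomposition of $\binom{n+1}{j}-\binom{n+1}{j-1}$.

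One small slip worth correcting in a clean write-up: the power of $y$ attached to each summand in the preceding proposition is $y^k$ (one factor of $y$ per column of the rook placement), not a power depending only on $j$. It is only after your substitution $k=i+j-1$ that this factor splits as $y^{j-1}\cdot y^{i}$, with $y^{i}$ then absorbed into the $P$-polynomials exactly as you intend; so the later steps are right, but the sentence ``a power of $y$ depending only on $j$'' is premature. Your remark about carrying the factor $\tfrac{1}{(1-q)q^n}$ through is well taken and indeed necessary (a check at $n=1$ shows it).
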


This formula is a linear combination of the polynomials $P_k =
\sum_{i=0}^k y^i q^{i(k+1-i)}$, the coefficients being polynomials in
$y$, just as in Theorem~\ref{main}.  With this result and
the inversion formula~\eqref{inv1}, we can prove Theorem~\ref{main}:
the last step is a simple binomial simplification.

\section{Applications}

Among all the objects of the list in Proposition~\ref{comb}, the
most studied are probably permutations and the pattern 13-2, see for
example \cite{CM,CN,SW,RP}. In particular, in \cite{CM,RP} we can find
methods for obtaining, as a function of $n$ for a given $k$, the
number of permutations of size $n$ with exactly $k$ occurrences of
pattern 13-2. By taking the Taylor series of~\eqref{main2}, we obtain
direct and quick proofs for these results. As an illustration
we give the formulae for $k\leq 3$ in the following proposition.

\begin{prop} The order 3 Taylor series of $\bra{W}(D+E)^{n-1}\ket{V}$
  is:
$$
\bra{W}(D+E)^{n-1}\ket{V}=C_n + \binom{2n}{n-3}q + \frac
n2\binom{2n}{n-4}q^2 + \frac{(n+1)(n+2)}{6}\binom{2n}{n-5}q^3+O(q^4),
$$
where $C_n$ is the $n$th Catalan number.
\end{prop}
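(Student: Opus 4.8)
The plan is to start from Corollary~\ref{main2}, which expresses $\bra{W}(D+E)^{n-1}\ket{V}$ as
$$
\frac{1}{(1-q)^n}\sum_{k=0}^n(-1)^k\Big(\tbinom{2n}{n-k}-\tbinom{2n}{n-k-2}\Big)\sum_{i=0}^k q^{i(k+1-i)},
$$
and expand everything as a power series in $q$ up to order $3$. The key observation is that the inner sum $\sum_{i=0}^k q^{i(k+1-i)}$ contributes $1$ from $i=0$ and $i=k$ (when $k\geq 1$), a term $q^k$ from $i=1$ and $i=k-1$, a term $q^{2(k-1)}$ from $i=2$ and $i=k-2$, and so on; so for the purpose of an order-$3$ truncation only very few values of $k$ matter for each power of $q$. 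Concretely I would write $P_k(q):=\sum_{i=0}^k q^{i(k+1-i)}$ and record $P_0=1$, $P_1=1$, $P_2=1+q$, $P_3=1+2q$, $P_4=1+2q+q^2$, $P_5=1+2q+2q^2$, $P_6=1+2q+2q^2+q^3$, $P_7=1+2q+2q^2+2q^3$, and $P_k=1+2q+2q^2+2q^3+O(q^4)$ for all $k\geq 7$.

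Next I would multiply by $(1-q)^{-n}=\sum_{m\geq 0}\binom{n+m-1}{m}q^m = 1 + nq + \binom{n+1}{2}q^2 + \binom{n+2}{3}q^3 + O(q^4)$ and collect, for each target power $q^0,\dots,q^3$, the finitely many $(k,i)$ pairs that contribute. For the $q^0$ coefficient only the $i=0$ (and $i=k$) terms survive, giving the alternating sum $\sum_k(-1)^k(\tbinom{2n}{n-k}-\tbinom{2n}{n-k-2})$ which telescopes to a single binomial and is classically $C_n$ (this identity can be assumed or checked directly by the reflection/ballot argument implicit in the Touchard--Riordan remark). For $q^1$ the contributions come from: the $m=1$ term of $(1-q)^{-n}$ paired with the $q^0$ part of the bracket-times-$P_k$ sum, plus the $m=0$ term paired with the $q^1$ part, where the only $k$ producing a $q^1$ in $P_k$ with suitably small exponent are $k=2$ (from $q^k=q^2$? no) — more carefully, $q^1$ in $P_k$ requires $i(k+1-i)=1$, i.e. $i=1,k=1$, so only $k=1$ gives a bare $q$; hence the bookkeeping is genuinely finite. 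I would assemble these, using the alternating-binomial telescoping at each stage, and simplify to $\binom{2n}{n-3}$, $\tfrac n2\binom{2n}{n-4}$, $\tfrac{(n+1)(n+2)}{6}\binom{2n}{n-5}$ for $q^1,q^2,q^3$ respectively.

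The main obstacle will be the algebraic simplification of the resulting alternating sums of products of binomial coefficients and polynomials in $n$ into the clean closed forms stated; this is where care is needed, because several differently-indexed alternating binomial sums must be combined and recognized. The cleanest route is probably to keep each coefficient as a sum $\sum_k(-1)^k\big(\tbinom{2n}{n-k}-\tbinom{2n}{n-k-2}\big)c_k(n)q^{(\text{something})}$, reindex so that each piece telescopes the way the $q^0$ piece does, and then handle the leftover $n$-dependent coefficients $c_k(n)$ (which come from the $(1-q)^{-n}$ expansion and are polynomial in $n$ of low degree) by a finite induction or by the standard Vandermonde-type identities. I would present this as: state the expansion of $P_k$ and of $(1-q)^{-n}$, extract the four coefficients, and verify each of the four resulting binomial identities, remarking that they are routine (the $q^0$ case being the known Catalan identity and the others being its perturbations).
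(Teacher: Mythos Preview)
Your approach---expand Corollary~\ref{main2} as a Taylor series in $q$---is exactly the paper's, which offers no further detail than that. But your table of $P_k(q)=\sum_{i=0}^k q^{i(k+1-i)}$ is wrong: the symmetry of the exponent is $i\mapsto k+1-i$, not $i\mapsto k-i$, so in particular $i=k$ contributes $q^k$, not $1$. The correct order-$3$ expansions are
\[
P_0=1,\quad P_1=1+q,\quad P_2=1+2q^2,\quad P_3=1+2q^3+O(q^4),\quad P_k=1+O(q^4)\ (k\geq 4).
\]
You effectively notice this yourself when you write ``$q^1$ in $P_k$ requires $i(k+1-i)=1$, i.e.\ $i=1,k=1$'', which contradicts your own table entry $P_1=1$. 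With the corrected $P_k$ the rest of your plan goes through cleanly: for instance the $q^1$-coefficient of the product with $(1-q)^{-n}$ becomes $nC_n-\binom{2n}{n-1}+\binom{2n}{n-3}=\binom{2n}{n-3}$, since $nC_n=\binom{2n}{n-1}$, and the $q^2$- and $q^3$-coefficients simplify similarly.
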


More generally, a computer algebra system can provide higher order
terms, for example it takes no more than a few seconds to obtain the
following closed formula for $[q^{10}]\bra{W}(D+E)^{n-1}\ket{V}$:
\begin{eqnarray*}
\tfrac {(2n)!}{10!(n+12)!(n-8)!} \Big(
{n}^{13}+70\,{n}^{12}+2093\,{n}^{11}+
32354\,{n}^{10}+228543\,{n}^{9}-318990\,{n}^{8}\\
-17493961\,{n}^{7} -104051458\,{n}^{6}-6828164\,{n}^{5} 
+2022876520\,{n}^{4}\\
+6310831968\,{n
}^{3}+5832578304\,{n}^{2}+14397419520\,n+5748019200 \Big),
\end{eqnarray*}
which is quite an improvement compared to the methods of
\cite{RP}. In addition to exact formula, we can give asymptotic estimates, for example
for the number of permutations with a given number of occurrences of pattern 13-2.

%\medskip

\begin{theorem} \label{asymp} For any fixed $m\geq 0$,
$$ [q^m]\bra{W}(D+E)^{n-1}\ket{V}  \sim 
\frac{4^nn^{m-\frac 32}}{\sqrt{\pi} m!}\quad\text{as $n\to\infty$.} $$
\end{theorem}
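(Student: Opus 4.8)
The plan is to extract the asymptotics directly from the closed formula in Corollary~\ref{main2}. Writing $N=\bra{W}(D+E)^{n-1}\ket{V}$, we have
$$
N = \frac{1}{(1-q)^n}\sum_{k\ge 0}(-1)^k\Big(\tbinom{2n}{n-k}-\tbinom{2n}{n-k-2}\Big)P_k(q),
\qquad P_k(q)=\sum_{i=0}^k q^{i(k+1-i)}.
$$
The first step is to expand $(1-q)^{-n}=\sum_{\ell\ge 0}\binom{n+\ell-1}{\ell}q^\ell$ and read off $[q^m]N$ as a finite sum. The key observation is that $P_k(q)=1+q^k+q^{2(k-1)}+\cdots$, so for the coefficient of $q^m$ only the terms with $i(k+1-i)\le m$ survive; in particular the dominant contribution as $n\to\infty$ comes from the constant term of $P_k$, i.e.\ from $i=0$, which contributes $1$ for every $k$, while the terms $i\ge 1$ force $k\le m$ and hence contribute only finitely many monomials.

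Next I would split $[q^m]N$ into a ``main'' part coming from $P_k(q)\mapsto 1$ and a ``lower-order'' part. The main part is
$$
\sum_{k\ge 0}(-1)^k\Big(\tbinom{2n}{n-k}-\tbinom{2n}{n-k-2}\Big)\binom{n+m-1}{m},
$$
and the alternating sum over $k$ telescopes: $\sum_{k\ge 0}(-1)^k(\binom{2n}{n-k}-\binom{2n}{n-k-2})$ collapses to a bounded quantity (in fact to something like $\binom{2n}{n}-\binom{2n}{n-1}=C_n$-type expressions after pairing terms), so this piece is asymptotically $\binom{n+m-1}{m}$ times a constant of order $4^n/\sqrt{\pi n}$ divided by... — more precisely one pairs consecutive $k$ to see $\sum_k(-1)^k\binom{2n}{n-k}\sim$ a single central binomial coefficient up to lower order. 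Combined with $\binom{n+m-1}{m}\sim n^{m-1}/(m-1)!$ one would need to check this does \emph{not} already give the answer, which suggests the constant-term contribution is actually subdominant or must be combined with the $i\ge 1$ terms. The honest approach is therefore: expand each $\binom{2n}{n-k}-\binom{2n}{n-k-2}$ using $\binom{2n}{n-k}=\binom{2n}{n}\exp(-k^2/n+O(k^4/n^2))$ via Stirling, treat $k$ as ranging over a window of size $O(\sqrt n)$, replace the sum over $k$ by a Gaussian integral, and track the power of $n$ produced by the polynomial factors $\binom{n+\ell-1}{\ell}$ with $\ell=m-i(k+1-i)$. The factor $\binom{2n}{n}\sim 4^n/\sqrt{\pi n}$ supplies the $4^n n^{-1/2}$; the Gaussian sum over $k$ and the binomial coefficient $\binom{n+m-1}{m}\sim n^m/m!$ together must supply $n^{m-1}$, with the $1/m!$ giving the stated constant $1/(\sqrt\pi\, m!)$.

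The cleanest route, and the one I would actually write up, is to use the probabilistic/saddle-point form: write $\binom{2n}{n-k}-\binom{2n}{n-k-2}=\binom{2n}{n}\,b_{n,k}$ where $b_{n,k}\to$ a Gaussian profile, substitute the series for $(1-q)^{-n}$, interchange the (finite in $q$-degree) sums, and for each fixed $m$ obtain
$$
[q^m]N=\binom{2n}{n}\sum_{\substack{i,k\ge 0\\ i(k+1-i)\le m}}(-1)^k\binom{n+m-i(k+1-i)-1}{m-i(k+1-i)}\,b_{n,k}.
$$
Then the dominant term is $i=0$: it gives $\binom{2n}{n}\binom{n+m-1}{m}\sum_k(-1)^k b_{n,k}$. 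Now $\sum_{k}(-1)^k b_{n,k}$ — note the alternating sign — is \emph{not} of order $1$ but of order $n^{-1/2}$, because pairing $k$ and $k+1$ produces a discrete derivative of the Gaussian profile, contributing an extra $1/\sqrt n$. This gives $\binom{2n}{n}\cdot \frac{n^{m}}{m!}\cdot \frac{c}{\sqrt n}\sim \frac{4^n}{\sqrt{\pi n}}\cdot\frac{n^m}{m!}\cdot\frac{c}{\sqrt n}$, and matching to $\frac{4^n n^{m-3/2}}{\sqrt\pi\, m!}$ forces $c=1$, which one verifies by evaluating $\sum_k(-1)^k b_{n,k}$ explicitly (it is essentially $(\binom{2n}{n-1}-\binom{2n}{n-3}+\cdots$ telescoped$)/\binom{2n}{n}$, a ratio one can pin down exactly). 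The terms with $i\ge 1$ lose at least one factor of $n$ from the reduced binomial coefficient and are genuinely lower order. The main obstacle is precisely this cancellation in the alternating sum over $k$: one must show the leading-order term in the naive estimate cancels and compute the true leading coefficient, which requires either an exact telescoping identity for $\sum_k(-1)^k(\binom{2n}{n-k}-\binom{2n}{n-k-2})\binom{n+m-1}{m}$ or a careful second-order Euler--Maclaurin / Gaussian-sum estimate. Once that constant is nailed down, assembling the pieces and confirming the $i\ge1$ remainder is $o(4^n n^{m-3/2})$ is routine.
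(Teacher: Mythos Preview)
Your setup is exactly right, but the execution derails because of a single arithmetic slip: you write $\binom{n+m-1}{m}\sim n^{m-1}/(m-1)!$, whereas in fact
\[
\binom{n+m-1}{m}=\frac{(n+m-1)(n+m-2)\cdots n}{m!}\sim \frac{n^m}{m!}.
\]
Once this is corrected, the ``main part'' you isolate already gives the full answer and everything after that paragraph is unnecessary.

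Concretely: the alternating sum you write down does not merely ``collapse to a bounded quantity'' or to ``something like $C_n$-type expressions''; it \emph{equals} $C_n$ on the nose. Indeed
\[
\sum_{k\ge 0}(-1)^k\Big(\tbinom{2n}{n-k}-\tbinom{2n}{n-k-2}\Big)
=\sum_{k\ge 0}(-1)^k\tbinom{2n}{n-k}-\sum_{j\ge 2}(-1)^{j}\tbinom{2n}{n-j}
=\tbinom{2n}{n}-\tbinom{2n}{n-1}=C_n,
\]
which of course is just the statement that the whole sum evaluated at $q=0$ is $C_n$. Hence your $i=0$ contribution is exactly
\[
C_n\binom{n+m-1}{m}\sim \frac{4^n}{\sqrt{\pi}\,n^{3/2}}\cdot\frac{n^m}{m!}=\frac{4^n n^{m-3/2}}{\sqrt{\pi}\,m!},
\]
which is the claimed asymptotic. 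For the $i\ge 1$ terms, you correctly note that $i(k+1-i)\ge k$ forces $k\le m$; for each of these finitely many $k$, the factor $\binom{2n}{n-k}-\binom{2n}{n-k-2}$ is $O(C_n)$, and the accompanying binomial $\binom{n+m-i(k+1-i)-1}{m-i(k+1-i)}$ is $O(n^{m-1})$, so the total contribution is $O(4^n n^{m-5/2})$, genuinely lower order. That finishes the proof.

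The ``main obstacle'' you identify --- delicate cancellation in the alternating sum requiring second-order Gaussian estimates --- is a phantom created by the arithmetic error. (Incidentally, your claim that $\sum_k(-1)^k b_{n,k}$ is of order $n^{-1/2}$ is also off: since the sum is $C_n/\binom{2n}{n}=1/(n+1)$, it is of order $n^{-1}$, and your power-counting in that paragraph does not actually match the target exponent $m-3/2$.) The paper's proof is precisely the short argument above: write $(1-q)^n\bra{W}(D+E)^{n-1}\ket{V}$, observe that its constant term is $C_n$ and each higher coefficient involves only boundedly many terms each of size $O(C_n)$, then multiply by $(1-q)^{-n}$ whose $q^m$ coefficient is $\sim n^m/m!$.
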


\begin{proof} When $n\to\infty$, the numbers $\tbinom{2n}{n-k}
  - \tbinom{2n}{n-k-2}$ are dominated by the Catalan number $\frac
  1{n+1} \tbinom{2n}{n}$. This implies that in
  $(1-q)^n\bra{W}(D+E)^{n-1}\ket{V}$, each higher order term grows at
  most as fast as the constant term $C_n$. On the other side, the
  coefficient of $q^m$ in $(1-q)^{-n}$ is asymptotically $n^m / m!$. 
%So we
%have the asymptotic:
%$$ [q^m]\bra{W}(D+E)^{n-1}\ket{V} \sim 
%\frac{C_nn^m}{m!}. $$
%Knowing the asymptotic of the Catalan numbers, we can conclude the proof.
\end{proof}

%\smallskip

Since any occurrence of the pattern 13-2 in a permutation is also an
occurrence of the pattern 1-3-2, a permutation with $k$ occurrences of
the pattern 1-3-2 has at most $k$ occurrences of the pattern 13-2. So
we get the following corollary.

%\medskip

\begin{corollary} Let $\psi_k(n)$ be the number of permutations in $\mathfrak{S}_n$
with at most $k$ occurrences of the pattern 1-3-2.
For any constant $C>1$ and $k\geq 0$, we have
$$ \psi_k(n) \leq C\frac{4^nn^{k-\frac 32}}{\sqrt{\pi} k!}$$
when $n$ is sufficiently large.
\end{corollary}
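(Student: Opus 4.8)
The plan is to deduce the corollary from Theorem~\ref{asymp} by a comparison argument. First I would fix $k\geq 0$ and observe that if a permutation $\sigma\in\mathfrak{S}_n$ has at most $k$ occurrences of the pattern $1\text{-}3\text{-}2$, then in particular it has at most $k$ occurrences of the (more restrictive) pattern $13\text{-}2$, since every occurrence of $13\text{-}2$ is also an occurrence of $1\text{-}3\text{-}2$. Hence
$$
\psi_k(n) \leq \sum_{m=0}^{k} [q^m]\bra{W}(D+E)^{n-1}\ket{V},
$$
because by Proposition~\ref{comb} the quantity $[q^m]\bra{W}(D+E)^{n-1}\ket{V}$ counts the permutations of size $n$ with exactly $m$ occurrences of $13\text{-}2$ (summing over all numbers of weak exceedances, i.e.\ setting $y=1$).

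Next I would apply Theorem~\ref{asymp} to each of the finitely many terms in this sum. For each fixed $m$ with $0\le m\le k$ we have $[q^m]\bra{W}(D+E)^{n-1}\ket{V}\sim \tfrac{4^n n^{m-3/2}}{\sqrt\pi\, m!}$ as $n\to\infty$. Since $n^{m-3/2}=o\!\left(n^{k-3/2}\right)$ whenever $m<k$, the term $m=k$ dominates the whole sum, so
$$
\sum_{m=0}^{k} [q^m]\bra{W}(D+E)^{n-1}\ket{V} \;\sim\; \frac{4^n n^{k-3/2}}{\sqrt\pi\, k!}
\qquad\text{as } n\to\infty.
$$
Combining this with the inequality above gives $\psi_k(n)\sim \tfrac{4^n n^{k-3/2}}{\sqrt\pi\,k!}$ up to lower-order terms; in particular, for any constant $C>1$ there is an $N$ (depending on $C$ and $k$) such that $\psi_k(n)\le C\,\tfrac{4^n n^{k-3/2}}{\sqrt\pi\,k!}$ for all $n\ge N$, which is the claim.

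There is essentially no serious obstacle here: the corollary is a soft consequence of the asymptotic estimate, and the only points requiring care are purely bookkeeping ones. The first is making sure the pattern-containment implication goes in the right direction ($13\text{-}2$ is a consecutive-in-value restriction of $1\text{-}3\text{-}2$, so having few $1\text{-}3\text{-}2$'s forces having few $13\text{-}2$'s, not the other way around). The second is that the sum over $m\le k$ of the asymptotic equivalents is again equivalent to its largest term — this is immediate since the number of terms is fixed and the ratio of consecutive terms tends to $0$. Turning the asymptotic equivalence into the stated inequality with an arbitrary constant $C>1$ is then just the definition of $\sim$. If one wanted a fully self-contained argument avoiding Theorem~\ref{asymp}, one could instead bound $[q^m]\bra{W}(D+E)^{n-1}\ket{V}$ directly from $\eqref{cr}$, using that the coefficients $\binom{2n}{n-k}-\binom{2n}{n-k-2}$ are all at most the Catalan number $C_n$ and that $[q^m](1-q)^{-n}=\binom{n+m-1}{m}\sim n^m/m!$, but this merely reproves the ingredients of Theorem~\ref{asymp} and is not needed.
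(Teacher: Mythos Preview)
Your argument is correct and is exactly the approach the paper takes: the paper simply remarks that every occurrence of $13\text{-}2$ is an occurrence of $1\text{-}3\text{-}2$, so $\psi_k(n)$ is bounded by the number of permutations with at most $k$ occurrences of $13\text{-}2$, and then Theorem~\ref{asymp} gives the asymptotics. One tiny wording slip: in your parenthetical you call $13\text{-}2$ a ``consecutive-in-value'' restriction, but it is a consecutive-in-\emph{position} restriction; the implication you state is nonetheless the right one.
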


%\medskip

So far we have only used Corollary~\ref{main2}.  Now we illustrate
what can be done with the refined formula given in Theorem~\ref{main}.
For example, when $q=0$ then the coefficient of $y^m$ is given by the expression
$\sum_{k=0}^n(-1)^k\left( \tbinom{n}{m}\tbinom{n}{m+k} -
  \tbinom{n}{m-1}\tbinom{n}{m+k+1}\right)$. This is equal to the
Narayana number $N(n,m) = \frac 1n\tbinom{n}{m}\tbinom{n}{m-1}$ (see
\cite{LW} for a combinatorial proof).

%\medskip

We can also get the coefficients for higher powers of $q$. For example
it is conjectured in \cite{LW} that the coefficient of $qy^m$ in
$\bra{W}y(yD+E)^{n-1}\ket{V}$ is equal to
$\tbinom{n}{m+1}\tbinom{n}{m-2}$. Applying our results we can prove:

\begin{prop} The coefficients of $qy^m$ and $q^2y^m$ in $\bra{W}y(yD+E)^{n-1}\ket{V}$ 
are respectively:
$$ \binom{n}{m+1}\binom{n}{m-2} \qquad \hbox{ and } \qquad \binom{n+1}{m-2}
\binom{n+1}{m+2}\frac{nm+m-m^2-4}{2(n+1)}. $$
\end{prop}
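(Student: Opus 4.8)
The plan is to extract the coefficients of $q^0$, $q^1$, and $q^2$ from the formula in Theorem~\ref{main}, working with the explicit expression
$$
\bra{W}y(yD+E)^{n-1}\ket{V} = \tfrac 1{(1-q)^n} \sum_{k=0}^n (-1)^k
\Big(\sum_{j} y^j\big( \tbinom{n}{j}\tbinom{n}{j+k} - \tbinom{n}{j-1}\tbinom{n}{j+k+1}\big)\Big)
\Big(\sum_{i=0}^k y^iq^{i(k+1-i)}\Big).
$$
First I would expand $(1-q)^{-n} = \sum_{\ell\ge 0}\binom{n+\ell-1}{\ell}q^\ell$ and determine, for each power of $q$ we want, which values of $i$ (equivalently which monomials $q^{i(k+1-i)}$ from the factor $P_k$) can contribute. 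For $q^0$ only $i=0$ contributes, giving $P_k \to 1$; for $q^1$ we pick up $i=0$ with the $q^1$ term of $(1-q)^{-n}$, plus $i=1$ with $k=1$ (since $i(k+1-i)=1$ forces $k=1$); for $q^2$ we get $i=0$ with the $q^2$ term, $i=1,k=1$ with one extra $q$, $i=1,k=2$ (where $i(k+1-i)=2$), and $i=2,k=3$ (where $i(k+1-i)=2$). So each coefficient is a finite sum of a few terms, each a signed product of binomials, possibly scaled by a $\binom{n+\ell-1}{\ell}$.

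Next I would assemble these pieces. For $q^1 y^m$: the $i=0$ part contributes $n\cdot\big[$ coefficient of $y^m$ in $\sum_k(-1)^k(\binom nj\binom{n}{j+k}-\binom n{j-1}\binom n{j+k+1})\big]$, which by the Narayana evaluation already recalled in the excerpt equals $n\,N(n,m)$; the $i=1,k=1$ part contributes $(-1)^1$ times the coefficient of $y^{m-1}$ in the $j$-sum at $k=1$, i.e. $-\big(\binom n{m-1}\binom n{m} - \binom n{m-2}\binom n{m+1}\big)$. Adding $n\,N(n,m) = \binom nm\binom n{m-1}$ to this and simplifying the binomial products (pull out common factorials, combine over a common denominator) should collapse to $\binom n{m+1}\binom n{m-2}$, confirming the conjecture of \cite{LW}. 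For $q^2 y^m$ I would do the analogous but longer bookkeeping: four contributions, using $N(n,m)$ again for the $i=0$ piece scaled by $\binom{n+1}{2}$, the known $q^1$-coefficient (just computed) for the $i=1,k=1$ piece scaled by $n$, and direct coefficient extraction at $k=2$ and $k=3$ for the remaining two. The target $\binom{n+1}{m-2}\binom{n+1}{m+2}\tfrac{nm+m-m^2-4}{2(n+1)}$ has denominator $2(n+1)$, which is consistent with the $\binom{n+1}{2}$ and the shifts $n\to n+1$ appearing when one rewrites $\binom nj$ sums via Pascal's rule, so I would aim to express everything with $\binom{n+1}{\cdot}$ factors from the start.

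The main obstacle will be the $q^2$ computation: it is a genuine four-term alternating sum of products of four binomial coefficients with arguments shifted by up to $3$, and making it telescope into the single clean product requires either a careful common-denominator manipulation or recognising each intermediate sum as a Narayana-type or Kreweras-type evaluation. I expect the cleanest route is to first prove a small lemma giving, for fixed small $k$, a closed form for $\sum_{j}y^j(\binom nj\binom n{j+k}-\binom n{j-1}\binom n{j+k+1})$ as a product of two binomials in $n$ (these are exactly the columns of the Narayana triangle shifted, and the formula should be $\binom n{m}\binom n{m+k}\cdot\tfrac{\text{linear in }m}{\cdots}$ type), and then the final assembly is just adding three or four such products with known rational coefficients. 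Verifying the resulting polynomial identity in $n$ and $m$ can, if needed, be finished by checking it holds for enough values (it is polynomial of bounded degree after clearing denominators), but I would prefer the direct algebraic simplification so the proof is self-contained. Once both coefficients are in hand, the proposition follows immediately.
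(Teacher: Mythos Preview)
Your approach---Taylor-expand the formula of Theorem~\ref{main} in $q$ and collect the finitely many contributions to each fixed power---is exactly what the paper does (it calls this the ``naive expansion of the Taylor series in $q$''). Two remarks. First, a slip in your $q^2$ bookkeeping: the pair $(i,k)=(2,3)$ gives $i(k+1-i)=4$, not $2$; the contribution you are missing is $(i,k)=(2,2)$, which yields $y^2q^2$. Second, for the final simplification the paper does not chase binomial identities term by term but instead observes that the assembled expression is $\binom{n}{m}^2$ times a rational function of $n$ and $m$, after which the reduction to the stated closed form is mechanical; this is likely cleaner than the case analysis you sketch, though your route would also succeed. (Incidentally, your $q^1$ computation is even simpler than you anticipate: since $n\,N(n,m)=\binom{n}{m}\binom{n}{m-1}$, the first two terms cancel outright and $\binom{n}{m+1}\binom{n}{m-2}$ drops out immediately.)
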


\begin{proof} A naive expansion of the Taylor series in $q$ gives a
  lengthy formula, which is simplified easily after noticing that it is the
  product of $\tbinom n m ^2$ and a rational fraction of $n$ and $m$.
\end{proof}

\acknowledgements The authors want to thank Lauren Williams for her
interesting discussions and suggestions.


\begin{thebibliography}{999}
%\bibitem{bi} P. Biane,  
%{\em Permutations suivant le type d'exc\'edance et le nombre d'inversions et 
%interpr\'etation combinatoire d'une fraction continue de Heine.} European J. Combin.  14  (1993),  no. 4, 277--284.
\bibitem{BECE}
R. A. Blythe, M. R. Evans, F. Colaiori and F. H. L. Essler,
Exact solution of a partially asymmetric exclusion model using a deformed 
oscillator algebra, J. Phys. A: Math. Gen. Vol. 33, (2000), 2313-2332.
\bibitem{BCEPR} R. Brak, S. Corteel, J. Essam, R. Parviainen and A. 
Rechnitzer,  
A combinatorial derivation of the PASEP stationary state.  
Electron. J. Combin.  13  (2006),  no. 1, 108, 23 pp. 
\bibitem{BE} R. Brak, and J.W. Essam,  
Asymmetric exclusion model and weighted lattice paths.  
J. Phys. A  37  (2004),  no. 14, 4183--4217.
\bibitem{Bu}
A. Burstein,  On some properties of permutation tableaux,
Ann. Combin. 11(3-4), (2007), 355-368.
\bibitem{CM}
A. Claesson, T. Mansour,
Counting Occurrences of a Pattern of Type (1,2) or (2,1) in Permutations,
Adv. in App. Maths. 29, (2002), 293-310.
\bibitem{Co}
S. Corteel, Crossings and alignments of permutations, Adv. in App. Maths.
38(2), (2007), 149-163.
\bibitem{CN}
S. Corteel and P. Nadeau, Bijections for permutation tableaux,
Eur. Jour. of Comb 30(1), (2009), 295-310.
\bibitem{CW}
S. Corteel and L. K. Williams, Tableaux combinatorics for the asymmetric 
exclusion process, Adv. in App. Maths. 39(3), (2007), 293-310.
\bibitem{DEHP}
B. Derrida, M. Evans, V. Hakim, V. Pasquier, Exact solution of a 1D asymmetric 
exclusion model using a matrix formulation, J. Phys. A: Math. Gen. 26 (1993), 
1493-1517.
%\bibitem{ds} E. Duchi, G. Schaeffer, 
%{\em A combinatorial approach to jumping particles}, Jal of Comb Theory A,  
%110  (2005),  no. 1, 1--29.
%\bibitem{fz}
%D. Foata and D. Zeilberger,
%{\em Denert's permutation statistic is indeed Euler-Mahonian},
%Stud. Appl. Math. 83 (1990), no. 1, 31--59.
\bibitem{GR}
A. Garsia and J. Remmel, q-Counting rook configurations and a formula of
Frobenius. J. Combin. Theory, Ser. A 41, (1986), 246-275.
\bibitem{J} M. Josuat-Verg\`es, Rook placements in Young diagrams and permutation enumeration, submitted (2008). arXiv:0811.0524 
\bibitem{KSZ}
A. Kasraoui, D. Stanton, J. Zeng, The combinatorics of Al-Salam-Chihara 
$q$-Laguerre polynomials, preprint (2008). arXiv:0810.3232v1.
\bibitem{Ke}
S. Kerov, Rooks on Ferrers Boards and Matrix Integrals, Zapiski. Nauchn. 
Semin. POMI, v.240 (1997), 136-146.
\bibitem{NTW} J.C. Novelli, J-Y. Thibon and L.K. Williams,
Combinatorial Hopf algebras, noncommutative Hall-Littlewood functions, and permutation tableaux, preprint (2008). arXiv:0804.0995
\bibitem{AP}
A. Postnikov, Total positivity, Grassmannians, and networks, Preprint (2006).
arxiv:math.CO/0609764. 
\bibitem{PB}
T. Prellberg and R. Brak, 
Critical Exponents from Non-Linear Functional Equations for Partially Directed Cluster Models, J. Stat. Phys., Vol. 78 (1995), 701-730.
\bibitem{RPS}
R. P. Stanley, Enumerative combinatorics Vol. 1, Cambridge university press 
(1986). 
\bibitem{SW} E. Steingr\'imsson, L. K. Williams, Permutation tableaux and
permutation patterns, J. Combin. Theory Ser. A, Vol. 114(2),
(2007), 211-234.
\bibitem{RP}
R. Parviainen, Lattice path enumeration of permutations with $k$ occurrences of the 
pattern 2-13, Journal of Integer Sequences, Vol. 9 (2006), Article 06.3.2.
\bibitem{JGP} J.-G. Penaud, A bijective proof of a Touchard-Riordan formula,
Disc. Math., Vol. 139 (1995), 347-360.
%\bibitem{TP}
%T. Prellberg, M. Rubey, personal communication.
\bibitem{R} M. Rubey, Nestings of Matchings and 
Permutations and North Steps in PDSAWs, FPSAC2008 (2008). arXiv:0712.2804
\bibitem{JT}
J. Touchard, 
Sur un probl\`eme de configurations et sur les fractions continues,
Can. Jour. Math., Vol. 4 (1952), 2-25.
\bibitem{USW}
M. Uchiyama, T. Sasamoto, M. Wadati, Asymmetric simple exclusion process with 
open boundaries and Askey-Wilson polynomials, J. Phys. A: Math. Gen. 37 (2004),
4985-5002.
\bibitem{AV}
A. Varvak, Rook numbers and the normal ordering problem, 
J. Combin. Theory Ser. A,  Vol. 112(2), (2005), 292-307.
\bibitem{XGV} 
X.G. Viennot,  Alternative tableaux and permutations, in preparation (2008).
\bibitem{XGV2}
X.G. Viennot, Alternative tableaux and partially asymmetric exclusion process, in 
preparation (2008).
\bibitem{LW}
L. K. Williams,  Enumeration of totally positive Grassmann cells,
Adv. Math., Vol. 190(2), (2005), 319-342.
\end{thebibliography}
\end{document}